\theoremstyle{plain}
\newtheorem{theorem}{Theorem}[section]
\newtheorem{lemma}[theorem]{Lemma}
\theoremstyle{definition}
\theoremstyle{remark}
\newtheorem{remark}[theorem]{Remark}
\newtheorem*{remark*}{Remark}
\numberwithin{equation}{section}
\title[QBD and multivariate OP]{Quasi-birth-and-death processes and \\ multivariate orthogonal polynomials}
\author{Lidia Fern\'andez and Manuel D. de la Iglesia}
\address{Lidia Fern\'andez\\
IEMATH-GR and Departamento de Matem\'atica Aplicada\\
Universidad de Granada\\
18071, Granada, Spain.}
\email{lidiafr@ugr.es}
\address{Manuel D. de la Iglesia\\
Instituto de Matem\'aticas \\
Universidad Nacional Aut\'onoma de M\'exico \\
Circuito Exterior, C.U.\\
04510, Mexico D.F. Mexico.}
\email{mdi29@im.unam.mx}
\thanks{The work of the first author was partially supported by MICINN of Spain and European Regional Development Fund (ERDF) through the grant PGC2018-094932-B-I00, and Research Group FQM-384 by Junta de Andaluc\'{i}a. 
The work of the second author was partially supported by PAPIIT-DGAPA-UNAM grant IN104219 (M\'exico) and CONACYT grant A1-S-16202 (M\'exico).}
\date{\today}
\subjclass[2010]{60J10, 60J60, 33C45, 42C05}
\keywords{Quasi-birth-and-death processes. Bivariate orthogonal polynomials. Urn models.}
\begin{document}

\maketitle

\begin{abstract}
The aim of this paper is to study some models of quasi-birth-and-death (QBD) processes arising from the theory of bivariate orthogonal polynomials. First we will see how to perform the spectral analysis in the general setting as well as to obtain results about recurrence and the invariant measure of these processes in terms of the spectral measure supported on some domain $\Omega\subset\mathbb{R}^d$. Afterwards, we will apply our results to several examples of bivariate orthogonal polynomials, namely product orthogonal polynomials, orthogonal polynomials on a parabolic domain and orthogonal polynomials on the triangle. We will focus on linear combinations of the Jacobi matrices generated by these polynomials and produce families of either continuous or discrete-time QBD processes. Finally, we show some urn models associated with these QBD processes.
\end{abstract}

\section{Introduction}

The connection between one-dimensional birth-death models and orthogonal polynomials goes back to the pioneering work of S. Karlin and J. McGregor \cite{KMc2, KMc3, KMc6}. In a series of papers they established an important connection between the transition probability functions of continuous-time birth-death processes and discrete-time birth-death chains (in this order) by means of a spectral representation, the so-called \emph{Karlin-McGregor integral representation formula}. This representation is possible  since the one-step transition probability matrix of the birth-death chain or the infinitesimal operator of the birth-death process are tridiagonal matrices, so we can apply the spectral theorem to find the corresponding spectral measure associated with the process. Many probabilistic aspects can be analyzed in terms of the corresponding orthogonal polynomials, such as transition probabilities, the invariant measure or the recurrence of the process. In the last 60 years, many other authors e.g. M. Ismail, G. Valent, P. Flajolet, F. Guillemin, H. Dette or E. van Doorn, to mention a few, have studied this connection and other probabilistic aspects. For a brief account of all these relations see \cite{Scho}.

A natural extension in this direction are the so-called \emph{quasi-birth-and-death} (QBD) \emph{processes}. The state space, instead of $\mathbb{N}_0$, is given by pairs of the form $(n,k)$, where $n\in\mathbb{N}_0$ is usually called the \emph{level}, while $1\leq k\leq r_n$ is referred to as the \emph{phase}. Observe that the number of phases may depend on the different levels. For a general setup see \cite{BTa, LaR}. Now the QBD process, at each time step, is restricted to move only between adjacent levels but transitions between phases are all possible. That means that the transition probability matrix (discrete-time) or the infinitesimal operator matrix (continuous-time) of the QBD process is then block-tridiagonal of the form \eqref{PP} (see below), also known as a \emph{block Jacobi matrix}. If $r_n=1$ for all $n\in\mathbb{N}_0$ then we go back to the classical birth-death chain. If $r_n=N$ for all $n\in\mathbb{N}_0$, where $N$ is a positive integer, then all blocks in the Jacobi matrix have the same dimension $N\times N$. In this case, the spectral analysis can be performed using \emph{matrix-valued orthogonal polynomials} (see \cite{DRSZ, G2} for the discrete-time case and \cite{DR} for the continuous-time case). In the last years, many new examples related to matrix-valued orthogonal polynomials have been analyzed by using spectral methods (see \cite{Clay, G2, G1, G5, GdI3, dI1, dIR}).

As it was mentioned in Section 5 of \cite{Clay}, a natural source of examples of more complicated QBD processes may come from the theory of multivariate orthogonal polynomials. These polynomials can be defined in terms of a positive linear functional $\mathcal{L}$ which we assume it is expressible as integrals with respect to a nonnegative weight function $w$ with finite moments supported on some domain $\Omega\subset\mathbb{R}^d$. If we start with a weight function $w$ then the corresponding multivariate orthogonal polynomials satisfy $d$ different three-term recurrence relations (see \eqref{TTRRvv} below). For each $1\leq i\leq d$, the coefficients of these recurrence relations can be written in block tridiagonal form (or block Jacobi matrix) $J_i$ and have the same structure as in \eqref{PP} (see below). The goal of this paper is to find appropriate normalizations of the multivariate orthogonal polynomials such that linear combinations of the corresponding Jacobi matrices $J_i$ of the form $\tau_1J_1+\cdots+\tau_dJ_d$ give rise to discrete or continuous-time QBD processes. In particular, we will study several examples of bivariate orthogonal polynomials.

Multivariate orthogonal polynomials have appeared before in the literature in connection with probabilistic and stochastic models. The first examples probably appeared in the study of some stochastic models in genetics \cite{KMc8}, Ehrenfest urn models \cite{KMc11} or linear growth models \cite{KMc7, Mil}. After that, many other authors like P. Diaconis, R. Griffiths, F.A. Gr\"unbaum or M. Rahman have found other connections between multivariate orthogonal polynomials and probabilistic models like the multinomial distribution \cite{DiGr, Grif}, Lancaster distributions \cite{Grif2}, composition birth-death processes \cite{Grif3} or poker dice games \cite{GR}. The multivariate orthogonal polynomials involved in these applications are always \emph{discrete}. Our approach is different since we will start from very well known examples of bivariate \emph{continuous} orthogonal polynomials and try to generate families of QBD processes from certain linear combinations of the Jacobi matrices generated by these polynomials.

The paper is divided into two parts. First part (Section \ref{sec1}) comprises an extension of the results from Section 5 of \cite{Clay} for a particular class of bivariate orthogonal polynomials to the general setting. Besides, we obtain other important results related with the invariant measure and the recurrence of the QBD processes. In the second part, we will apply our results to several examples of QBD processes generated by bivariate orthogonal polynomials. In Section \ref{SecPOP} we consider product orthogonal polynomials such as the product Jacobi and Laguerre polynomials and we show that the QBD processes have independent components. In Section \ref{secpar} we will study a family of QBD processes associated with orthogonal polynomials on a parabolic domain. The two components of the QBD process are now dependent. In particular, we will give an urn model associated with one particular situation. In Section \ref{sectri} we will study a family of QBD processes associated with orthogonal polynomials on the triangle. The transitions between the bivariate states are much more involved in this situation. Nevertheless, we will be able to give an urn model by considering a stochastic block LU factorization of the Jacobi matrix, in the same spirit as the one used in \cite{GdI1, GdI2}. Finally, we finish in Section \ref{secult} with some concluding remarks and suggestions for further research.

\section{QBD processes and multivariate orthogonal polynomials}\label{sec1}

Let $\{Z_t : t\geq0\}$ be a time-homogeneous Markov chain on the state space of pairs $(n,k)$ where $n\in\mathbb{N}_0$ is usually called the \emph{level} and $1\leq k\leq r_n$ is usually called the ($n$-dependent) \emph{phase}. We say that $Z_t$ is a \emph{quasi-birth-and-death (QBD) process} if the only allowed transitions are between \emph{adjacent levels}, but transitions between phases are all possible.

If we have a discrete-time QBD process $\{Z_t : t=0,1,\ldots\}$ this condition is equivalent to
$$
\mathbb{P}\left[Z_1=(n_1,k_1) \; |\; Z_0=(n_0,k_0)\right]=0,\quad |n_1-n_0|>1,
$$
The one step transition probability matrix $\bm P$ has a block-tridiagonal form
\begin{equation}\label{PP}
\bm P=\left(\begin{array}{cccccc}
\bm B_{0}   & \bm A_{0}   &         &       &\bigcirc \\
\bm C_{1}   & \bm B_{1}   & \bm A_{1} &       &          \\
          & \bm C_{2}   & \bm B_{2} & \bm A_2 &         \\
  \bigcirc         &           & \ddots  & \ddots & \ddots
\end{array}\right),
\end{equation}
where $\bm A_i$, $\bm B_i$ and $\bm C_i$ are matrices of dimension $r_i\times r_{i+1}$, $r_i\times r_{i}$ and $r_i\times r_{i-1}$, respectively. The symbol $\bigcirc$ stands for block zero matrices which fill the remaining entries. In the entries of the matrix $\bm A_i$ we can find the probabilities of all the different ways of moving up one level while going from any phase to any other phase, starting at level $i$. The number of phases $r_i$ depend on the level $i$. The same interpretation applies for the coefficients $\bm B_i$ (staying at the same level) and $\bm C_i$ (moving down one level). If $r_i=1$ for all $i$ then we recover the classical discrete-time birth-death chain on $\mathbb{N}_0$.

Let us denote by $\bm e_N$ the $N$-dimensional vector with all components equal to 1, i.e.
\begin{equation}\label{eeN}
\bm e_N=(1,1,\ldots,1)^T,
\end{equation}
and we will also use the notation $\bm e=\bm e_\infty$. Since $\bm P$ is a stochastic matrix we have nonnegative (scalar) entries
and all (scalar) rows add up to one, i.e. $\bm P\bm e=\bm e$. In other words,
\begin{equation}\label{stochcond}
\bm B_0\bm e_{r_0}+\bm A_0\bm e_{r_1}=\bm e_{r_0},\quad \bm C_i\bm e_{r_{i-1}}+\bm B_i\bm e_{r_i}+\bm A_i\bm e_{r_{i+1}}=\bm e_{r_i},\quad i\geq1.
\end{equation}
If we have a continuous-time QBD process then we will assume that there exists a conservative infinitesimal operator $\bm{\mathcal{A}}$ associated with the transition probability function $\bm P(t)$ and it has the same block tridiagonal structure as in \eqref{PP}. That means that all off-diagonal (scalar) entries are nonnegative and all (scalar) rows add up to 0, i.e $\bm{\mathcal{A}}\bm e=\bm 0$. In other words,
\begin{equation*}\label{stochcond2}
\bm B_0\bm e_{r_0}+\bm A_0\bm e_{r_1}=\bm 0,\quad \bm C_i\bm e_{r_{i-1}}+\bm B_i\bm e_{r_i}+\bm A_i\bm e_{r_{i+1}}=\bm 0,\quad i\geq1.
\end{equation*}
The transition probability function $\bm P(t)$ with $\bm P(0)=\bm I$ and $\bm P'(0)=\bm{\mathcal{A}}$ satisfies the Kolmogorov equations
$$
\bm P'(t)=\bm{\mathcal{A}}\bm P(t)=\bm P(t)\bm{\mathcal{A}},\quad t\geq0.
$$

Our goal is to relate transition probabilities matrices or infinitesimal operators of the form \eqref{PP} with the theory of multivariate orthogonal polynomials and viceversa. In particular, we will focus on examples already known in the theory of multivariate orthogonal polynomials from which we can derive block tridiagonal matrices of the form \eqref{PP} with probabilistic properties. If we start with \eqref{PP} and we want to use the spectral theorem for multivariate orthogonal polynomials, then we will have to assume several hypothesis, as we will see now. This approach, already introduced in \cite{Clay} for the case of bivariate orthogonal polynomials and coefficients $\bm B_n=0$, $\bm A_n$ upper bidiagonal and $\bm C_n$ lower bidiagonal, relies on the spectral theory of commuting self-adjoint operators (see \cite{DX14}) and it is different from previous approaches (see \cite{KMc7, Mil}). Although we will be interested in finding QBD processes from very well known examples of bivariate orthogonal polynomials, we will show the general case of multivariate orthogonal polynomials generated by \eqref{PP}.

Let us denote by $\mathbb{R}[x_1,\ldots,x_d]$ the ring of polynomials with $d\in\mathbb{N}$. For each level $n\in\mathbb{N}_0,$ the number of phases will depend on $n$ and $d$ in the following form:
\begin{equation}\label{knd}
r_n=r_n^d=\binom{n+d-1}{n}.
\end{equation}
This number is just the dimension of the space of all homogeneous polynomials of total degree $n$ in $\mathbb{R}[x_1,\ldots,x_d]$. Let us assume that we can write $\bm P$ (or $\bm{\mathcal{A}}$) in \eqref{PP} in the following way
\begin{equation}\label{Palpha}
\bm P=\tau_1 J_1+\cdots+\tau_d J_d,
\end{equation}
where $\tau_i\in\mathbb{R}, i=1,\ldots,d$ (to be determined depending on the example) and $J_i$ are block tridiagonal matrices of the form \eqref{PP} with coefficients $C_{n+1,i}, B_{n,i}, A_{n,i}, i=1,\ldots,d, n\ge 0,$  of the same dimension as in $\bm P$. We will denote by $C_{n+1}^T, B_n, A_n, n\geq0,$ the \emph{joint matrices} associated with $C_{n+1,i}^T, B_{n,i}, A_{n,i}, i=1,\ldots,d$, i.e. the row block column vectors built from these coefficients. All these matrices are subject to the following rank conditions:
\begin{align}
\label{rankc1} \mbox{rank} (A_{n,i})&=\mbox{rank} (C_{n+1,i})=r_n^d,\\
\label{rankc2} \mbox{rank} (A_{n})&=\mbox{rank} (C_{n+1}^T)=r_{n+1}^d.
\end{align}
Since $A_n$ has full rank, it has a generalized inverse, which we denote by $D_n^T=(D_{n,1}^T\, \cdots \,D_{n,d}^T)$. Therefore, we have
\begin{equation}\label{geninvA}
D_n^TA_n=\sum_{i=1}^dD_{n,i}^TA_{n,i}=I.
\end{equation}
From here we can construct recursively a family of multivariate polynomials $(\mathbb{P}_n)_{n\geq0}$ where $\mathbb{P}_n=(P_{n,1},\ldots,P_{n,r_n^d})^T$ using Theorem 3.3.5 of \cite{DX14}, by the following formula:
\begin{equation*}\label{pols}
\mathbb{P}_{n+1}(x)=\sum_{i=1} ^dx_iD_{n,i}^T\mathbb{P}_n(x)+E_n\mathbb{P}_n(x)+F_n\mathbb{P}_{n-1}(x),
\end{equation*}
where
$$
E_n=-\sum_{i=1}^dD_{n,i}^TB_{n,i},\quad F_n=-\sum_{i=1}^dD_{n,i}^TC_{n,i}^T.
$$
Here $x=(x_1,\ldots,x_d)$. Finally, in order to apply the spectral theorem for commuting self-adjoint operators, we need to assume first the following commutativity conditions
\begin{equation*}\label{commc}
J_iJ_j=J_jJ_i,\quad\mbox{for all}\quad i,j=1,\ldots,d,
\end{equation*}
and second that we can ``symmetrize'' in some way each one of the operators $J_i$. For that, we will have to assume that there exists a sequence of nonsingular matrices $(S_n)_{n\geq0}$, each of dimension $r_n^d\times r_n^d$ such that
\begin{equation}\label{symm1}
S_nB_{n,i}S_n^{-1}\quad\mbox{is symmetric}\quad n\geq0,\quad i=1,\ldots,d,
\end{equation}
and
\begin{equation}\label{symm2}
A_{n,i}S_{n+1}S_{n+1}^T=S_nS_n^TC_{n+1,i}^T, \quad n\geq0,\quad i=1,\ldots,d.
\end{equation}

Under all these hypotheses we can guarantee (see Theorem 3.5.1 of \cite{DX14}) that there exists a positive definite linear functional $\mathcal{L}$ such that
\begin{equation*}
\begin{split}
\mathcal{L}(\mathbb{P}_i\mathbb{P}_j^T)&=\bm 0_{r_i\times r_j},\quad i\neq j,\\
\mathcal{L}(\mathbb{P}_j\mathbb{P}_j^T)&=\Pi_j^{-1},\quad \Pi_j^{-1}=S_jS_j^T.
\end{split}
\end{equation*}
If we assume that $\mathcal{L}$ is expressible as integrals with respect to a (scalar-valued) nonnegative weight function $w(x)$ with finite moments supported on $\Omega\subset\mathbb{R}^d$, then we have
\begin{equation}\label{norms}
\begin{split}
\int_\Omega\mathbb{P}_i(x)\mathbb{P}_j^T(x)w(x)dx&=\bm 0_{r_i\times r_j},\quad i\neq j,\\
\int_\Omega\mathbb{P}_j(x)\mathbb{P}_j^T(x)w(x)dx&=\Pi_j^{-1},\quad \Pi_j^{-1}=S_jS_j^T.
\end{split}
\end{equation}
In particular, the multivariate polynomials $(\mathbb{P}_n)_{n\geq0}$ satisfies the three-term recurrence relations
\begin{equation}\label{TTRRvv}
x_i\mathbb{P}_n(x)=A_{n,i}\mathbb{P}_{n+1}(x)+B_{n,i}\mathbb{P}_n(x)+C_{n,i}\mathbb{P}_{n-1}(x),\quad n\geq0,\quad i=1,\ldots,d,\quad \mathbb{P}_{-1}=0.
\end{equation}

The strong tool of the spectral theorem for commuting self-adjoint operators allows us to derive the analogue of the \emph{Karlin-McGregor integral representation formula}. If we have a discrete-time QBD process, then this formula gives an expression of the $(i,j)$ block of the matrix $\bm P^n$ in terms of the multivariate orthogonal polynomials. Indeed,
\begin{equation}\label{KMcF}
\bm P_{i,j}^n=\left(\int_\Omega(\tau_1x_1+\cdots+\tau_dx_d)^n\mathbb{P}_i(x)\mathbb{P}_j^T(x)w(x)dx\right)\Pi_j.
\end{equation}
Observe that each block $\bm P_{i,j}^n$ is of dimension $r_i^d\times r_j^d$ and the entries of this block gives all probabilities of moving from one phase to any other. In the case when the family of polynomials $(\mathbb{P}_n)_{n\geq0}$ is mutually orthogonal (and therefore $\Pi_j$ is a diagonal matrix with diagonal entries $\Pi_{j,k}, k=1,\ldots,r_j^d$) we have a compact way of expressing these probabilities by the following formula
\begin{equation}\label{KMcFd}
\mathbb{P}\left[Z_n=(j,j') \; |\; Z_0=(i,i')\right]=\left(\bm P_{i,j}^n\right)_{i',j'}=\Pi_{j,j'}\int_\Omega\left(\sum_{k=1}^d\tau_kx_k\right)^nP_{i,i'}(x)P_{j,j'}(x)w(x)dx.
\end{equation}

If we have a continuous-time QBD process then this formula gives an expression of the $(i,j)$ block of the transition function $\bm P(t)$ in terms of the multivariate orthogonal polynomials. Indeed,
\begin{equation}\label{KMcF2}
\bm P_{i,j}(t)=\left(\int_\Omega\mbox{exp}\left((\tau_1x_1+\cdots+\tau_dx_d)t\right)\mathbb{P}_i(x)\mathbb{P}_j^T(x)w(x)dx\right)\Pi_j.
\end{equation}
Again, each block $\bm P_{i,j}(t)$ is of dimension $r_i^d\times r_j^d$ and if the family of polynomials $(\mathbb{P}_n)_{n\geq0}$ is mutually orthogonal, then we have
\begin{equation}\label{KMcFd2}
\mathbb{P}\left[Z_t=(j,j') \; |\; Z_0=(i,i')\right]=\left(\bm P_{i,j}(t)\right)_{i',j'}=\Pi_{j,j'}\int_\Omega\mbox{exp}\left(\sum_{k=1}^d\tau_kx_k\right)P_{i,i'}(x)P_{j,j'}(x)w(x)dx.
\end{equation}

The case of regular discrete-time birth-death chain can be found in \cite{KMc6}, while the case of regular birth-death processes can be found in \cite{KMc2,KMc3}.

\medskip

On the contrary, if we have a nonnegative weight function $w(x)$ with finite moments supported on some domain $\Omega\subset\mathbb{R}^d$ then it is possible to construct a family of multivariate polynomials $(\mathbb{P}_n)_{n\geq0}$ satisfying \eqref{TTRRvv}, where the coefficients can be computed in terms of the linear functional generated by the weight function (see Theorem 3.3.1 of \cite{DX14}). All examples we will see in this paper are of this form.

The sequence of ``norms'' $(\Pi_n^{-1})_{n\geq0}$ in \eqref{norms}, where each $\Pi_n$ is a nonsingular matrix of dimension $r_n^d\times r_n^d$, will play an important probabilistic role related with the concept of \emph{invariant measure} associated with $\bm P$ (or $\bm{\mathcal{A}}$), as we will see now. First, we will derive a formula to directly compute $(\Pi_n)_{n\geq0}$ in terms of the coefficients $A_{n,i}$ and the generalized inverse of $C_{n+1}^T$.
\begin{lemma}
Let $(\Pi_n)_{n\geq0}$ be defined by \eqref{norms}. Then, for $n\geq1$, we have
\begin{equation}\label{formulaca}
\Pi_n=\Pi_0\sum_{i_1,\ldots,i_n\in\{1,\ldots,d\}}G_{n,i_1}G_{n-1,i_2}\cdots G_{1,i_n}A_{0,i_n}A_{1,i_{n-1}}\cdots A_{n-1,i_1},
\end{equation}
where $G_n=(G_{n,1}\,\cdots\,G_{n,d})$ is a generalized inverse of $C_n^T=(C_{n,1}\,\cdots\,C_{n,d})^T$. Moreover, the representation is independent of the choice of the generalized inverse $G_n$.
\end{lemma}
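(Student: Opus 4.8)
The plan is to turn the symmetrization identity \eqref{symm2} into a first-order recursion for the norms $(\Pi_n)_{n\geq0}$ and then to unfold that recursion. Writing $\Pi_j^{-1}=S_jS_j^T$ as in \eqref{norms}, the relation \eqref{symm2} reads $A_{n,i}\Pi_{n+1}^{-1}=\Pi_n^{-1}C_{n+1,i}^T$ for all $n\geq0$ and $i=1,\ldots,d$; multiplying on the left by $\Pi_n$ and on the right by $\Pi_{n+1}$ gives the key two-term identity
\begin{equation}\label{eq:keytwoterm}
\Pi_n A_{n,i}=C_{n+1,i}^T\Pi_{n+1},\qquad n\geq0,\quad i=1,\ldots,d.
\end{equation}

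First I would solve \eqref{eq:keytwoterm} for $\Pi_{n+1}$. Since $G_{n+1}=(G_{n+1,1}\,\cdots\,G_{n+1,d})$ is a generalized inverse of the full-column-rank matrix $C_{n+1}^T$ (full rank by \eqref{rankc2}), it is in fact a left inverse: if $C_{n+1}^T G_{n+1}C_{n+1}^T=C_{n+1}^T$ and $C_{n+1}^T$ is injective, then $C_{n+1}^T(G_{n+1}C_{n+1}^T-I)=0$ forces $G_{n+1}C_{n+1}^T=I$, i.e. $\sum_{i=1}^d G_{n+1,i}C_{n+1,i}^T=I$ (the analogue of \eqref{geninvA}). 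Multiplying \eqref{eq:keytwoterm} on the left by $G_{n+1,i}$ and summing over $i$ therefore yields the one-step recursion
\begin{equation}\label{eq:onestep}
\Pi_{n+1}=\sum_{i=1}^d G_{n+1,i}\,\Pi_n\,A_{n,i},\qquad n\geq0.
\end{equation}

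Next I would iterate \eqref{eq:onestep}. A straightforward induction on $n$, substituting the expression for $\Pi_{n-1}$ into $\Pi_n=\sum_{i_1}G_{n,i_1}\Pi_{n-1}A_{n-1,i_1}$ and relabelling the summation indices, gives
\begin{equation}\label{eq:unfolded}
\Pi_n=\sum_{i_1,\ldots,i_n\in\{1,\ldots,d\}}G_{n,i_1}G_{n-1,i_2}\cdots G_{1,i_n}\,\Pi_0\,A_{0,i_n}A_{1,i_{n-1}}\cdots A_{n-1,i_1}.
\end{equation}
Finally, since $r_0^d=\binom{d-1}{0}=1$, the block $\Pi_0$ is a $1\times1$ matrix, i.e. a positive scalar, and hence commutes with every factor; pulling it to the front of \eqref{eq:unfolded} produces exactly \eqref{formulaca}. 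For the independence statement I would argue intrinsically rather than by a direct computation: the left-hand side $\Pi_n$ is fixed by its defining property \eqref{norms} and makes no reference to any generalized inverse, while the derivation of \eqref{eq:unfolded} used only the two-term identity \eqref{eq:keytwoterm}, which is independent of $G$, together with the defining relation $G_kC_k^T=I$, valid for every admissible generalized inverse. Hence the right-hand side equals $\Pi_n$ for every choice of the $G_k$, and is therefore independent of that choice.

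The computation is essentially forced once \eqref{symm2} is rewritten in terms of $\Pi_n$, so I do not expect a serious obstacle; the only genuinely delicate points are bookkeeping ones. One must get the order of the factors and the pairing of the summation indices right in \eqref{eq:unfolded} (the outermost $G_{n,i_1}$ pairs with the outermost $A_{n-1,i_1}$, and so on inward, producing the palindromic index pattern of \eqref{formulaca}), and one must note explicitly that moving $\Pi_0$ to the far left is legitimate precisely because $\Pi_0$ is scalar rather than a genuine matrix.
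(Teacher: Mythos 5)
Your derivation of the recursion and its iteration is correct and coincides with the paper's: both start from \eqref{symm2} rewritten as $\Pi_{n-1}A_{n-1,i}=C_{n,i}^T\Pi_n$, left-multiply by a generalized inverse of $C_n^T$ to obtain $\Pi_n=\sum_{i=1}^dG_{n,i}\Pi_{n-1}A_{n-1,i}$, and unfold. Where you genuinely diverge is the independence claim. The paper proves it by an explicit computation: it writes the singular-value decomposition of $C_n^T$, parametrizes all generalized inverses as the Moore--Penrose inverse plus a term $U_n^T\bigl[\,\bigcirc\;\;\Lambda_{n,1}\bigr]W_n$ with $\Lambda_{n,1}$ arbitrary, and checks that the extra term contributes nothing when applied to \eqref{formuddd}. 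You instead argue intrinsically: every generalized inverse of the full-column-rank matrix $C_n^T$ is automatically a left inverse (from $C_n^TG_nC_n^T=C_n^T$ and injectivity), so the entire derivation goes through verbatim for \emph{any} admissible $G_n$, and since the left-hand side $\Pi_n$ is pinned down by \eqref{norms} without reference to $G_n$, the right-hand side must coincide for all choices. This is logically sound and arguably cleaner --- it replaces the paper's SVD bookkeeping with a one-line observation --- at the modest cost of making explicit the (easy) fact that a $\{1\}$-generalized inverse of an injective matrix is a left inverse, which the paper simply assumes by writing $G_nC_n^T=I$. Your remarks about the palindromic index pairing and about $\Pi_0$ being a $1\times1$ scalar that can be pulled to the front are both correct and consistent with the paper's statement.
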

\begin{proof}
From \eqref{symm2} we have $\Pi_{n-1}A_{n-1,i}=C_{n,i}^T\Pi_{n}$. Written in terms of the joint matrices we have
\begin{equation}\label{formuddd}
C_{n}^T\Pi_{n}=\begin{pmatrix}\Pi_{n-1} &&\\&\ddots&\\&&\Pi_{n-1}\end{pmatrix}A_{n-1}.
\end{equation}
Now, multiplying on the left by a generalized inverse $G_n$ of $C_n^T$ (so that $G_nC_n^T=I$) we get
$$
\Pi_n=\sum_{i=1}^dG_{n,i}\Pi_{n-1}A_{n-1,i},\quad n\geq1.
$$
Iterating this formula we get \eqref{formulaca}. For the invariance of the representation, consider the singular-value decomposition of $C_n^T$ given by
$$
C_n^T=W_n^T\begin{bmatrix} \Lambda_n\\ \bigcirc\end{bmatrix}U_n,
$$
where $W_n, \Lambda_n$ and $U_n$ are $dr_{n-1}^d\times dr_{n-1}^d$, $r_n^d\times r_n^d$ and $r_n^d\times r_n^d$ matrices, respectively. A generalized inverse is then given by
$$
G_n=U_n^T\begin{bmatrix} \Lambda_n^{-1}&\Lambda_{n,1}\end{bmatrix}W_n,
$$
where $\Lambda_{n,1}$ is any $r_n^d\times(dr_{n-1}^d-r_n^d)$ matrix. Observe from the definition \eqref{knd} that $dr_{n-1}^d-r_n^d\geq1$ for $n,d\geq2$. $G_n$ can be written as
$$
G_n=U_n^T\begin{bmatrix} \Lambda_n^{-1}&\bigcirc\end{bmatrix}W_n+U_n^T\begin{bmatrix} \bigcirc&\Lambda_{n,1}\end{bmatrix}W_n.
$$
The first part of $G_n$ is the so-called pseudo inverse or the Moore-Penrose inverse, which is unique. Multiplying this $G_n$ on the left in \eqref{formuddd} and using again $\Pi_{n-1}A_{n-1,i}=C_{n,i}^T\Pi_{n}$, we conclude that the second part of the sum in $G_n$ must vanish, so formula \eqref{formulaca} is independent of the choice $\Lambda_{n,1}$.

\end{proof}
\begin{remark}
Observe that $\Pi_0$ in \eqref{formulaca} is a number which can be taken as 1 if we assume that the spectral measure $w(x)$ is a probability measure.
\end{remark}
\begin{remark}
Similarly, using a generalized inverse $D_n^T$ of $A_n$ (see \eqref{geninvA}) we can derive a formula for the sequence of norms $(\Pi_n^{-1})_{n\geq0}$. Indeed,
\begin{equation*}
\Pi_n^{-1}=\Pi_0^{-1}\sum_{i_1,\ldots,i_n\in\{1,\ldots,d\}}D_{n-1,i_1}^TD_{n-2,i_2}^T\cdots D_{0,i_n}^TC_{1,i_n}^TC_{2,i_{n-1}}^T\cdots C_{n,i_1}^T,
\end{equation*}
and, again, this is independent of the choice of the generalized inverse $D_n^T$.
\end{remark}
\begin{remark}
For the univariate case of birth-death chains, the matrices $\Pi_n, n\geq0,$ are now numbers, which are usually called the \emph{potential coefficients}. They can be written as
$$
\pi_0=1,\quad \pi_n=\frac{a_0\cdots a_{n-1}}{c_1\cdots c_n},\quad n\geq1,
$$
where we denote here $\pi_n=\Pi_n$, $a_n=A_{n,i}$ and $c_n=C_{n,i}$ (there is only one index $i$).
\end{remark}

\begin{theorem}\label{Teo}
Let $\bm P$ be the transition probability matrix given by \eqref{PP}. Define the sequence of matrices $\Pi_n$, $n\geq1$, as in \eqref{formulaca} with $\Pi_0=(\int_\Omega w(x)dx)^{-1}$. Consider the following row vector
\begin{equation}\label{ID}
\mbox{\boldmath$\pi$}=
(\Pi_0;(\Pi_1\bm e_{r_1})^T;(\Pi_2\bm e_{r_2})^T;\cdots),
\end{equation}
where $\bm e_N$ and $r_n$ are defined by \eqref{eeN} and \eqref{knd}, respectively.  Then $\mbox{\boldmath$\pi$}$ is an invariant measure for the discrete-time QBD process $\bm P$, i.e. all components of $\mbox{\boldmath$\pi$}$ are nonnegative and
\begin{equation}\label{IDprop}
\mbox{\boldmath$\pi$}\bm P=\mbox{\boldmath$\pi$}.
\end{equation}
\end{theorem}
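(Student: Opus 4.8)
The plan is to recast the invariance identity \eqref{IDprop} as a single block-matrix relation and reduce it to a \emph{reversibility} (detailed-balance) statement for $\bm P$. Write $\bm\Pi=\mathrm{diag}(\Pi_0,\Pi_1,\dots)$ for the block-diagonal matrix with diagonal blocks the $\Pi_n$ of \eqref{formulaca}. Since each $\Pi_n$ is symmetric (it is the inverse of the Gram matrix $\Pi_n^{-1}=\int_\Omega\mathbb{P}_n\mathbb{P}_n^Tw\,dx$ from \eqref{norms}), the row vector \eqref{ID} is precisely $\mbox{\boldmath$\pi$}=\bm e^T\bm\Pi$. Hence \eqref{IDprop} follows at once from two facts: the stochasticity $\bm P\bm e=\bm e$ and the block reversibility $\bm\Pi\bm P=\bm P^T\bm\Pi$, since then $\mbox{\boldmath$\pi$}\bm P=\bm e^T\bm\Pi\bm P=\bm e^T\bm P^T\bm\Pi=(\bm P\bm e)^T\bm\Pi=\bm e^T\bm\Pi=\mbox{\boldmath$\pi$}$. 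The first fact is the stochasticity conditions \eqref{stochcond}; all the work is in establishing $\bm\Pi\bm P=\bm P^T\bm\Pi$.

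Comparing blocks of $\bm\Pi\bm P$ and $\bm P^T\bm\Pi$ using the form \eqref{PP}, the reversibility is equivalent to the identities $\Pi_n\bm B_n=\bm B_n^T\Pi_n$, $\Pi_n\bm A_n=\bm C_{n+1}^T\Pi_{n+1}$ and $\Pi_{n+1}\bm C_{n+1}=\bm A_n^T\Pi_n$ for $n\ge0$, where $\bm A_n=\sum_{i=1}^d\tau_iA_{n,i}$, $\bm B_n=\sum_{i=1}^d\tau_iB_{n,i}$, $\bm C_n=\sum_{i=1}^d\tau_iC_{n,i}$; the third is the transpose of the second, by symmetry of the $\Pi_n$. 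The second identity is obtained by summing, weighted by $\tau_i$, the relation $\Pi_nA_{n,i}=C_{n+1,i}^T\Pi_{n+1}$ already derived in the proof of the Lemma from \eqref{symm2}. For the symmetry of $\Pi_n\bm B_n$ it suffices that each $\Pi_nB_{n,i}$ be symmetric: multiplying \eqref{TTRRvv} on the right by $\mathbb{P}_n^T$ and integrating against $w$, orthogonality \eqref{norms} kills the $A$- and $C$-terms and leaves $\int_\Omega x_i\mathbb{P}_n\mathbb{P}_n^Tw\,dx=B_{n,i}\Pi_n^{-1}$; the left-hand side is manifestly symmetric, so $B_{n,i}\Pi_n^{-1}=\Pi_n^{-1}B_{n,i}^T$, i.e. $\Pi_nB_{n,i}=B_{n,i}^T\Pi_n$. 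The boundary blocks $(0,0)$ and $(0,1)$ are the $n=0$ instances of these identities together with the boundary equation in \eqref{stochcond}, so the verification of $\mbox{\boldmath$\pi$}\bm P=\mbox{\boldmath$\pi$}$ is complete.

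It remains to show that every component of $\mbox{\boldmath$\pi$}$ is nonnegative, i.e. $\Pi_n\bm e_{r_n}\ge0$ entrywise. I expect this to be the main obstacle. The point is that the $\Pi_n$ are inverse Gram matrices, so they need not have nonnegative entries, and the expansion \eqref{formulaca} is assembled from the \emph{individual} coefficients $A_{n,i}$ and the generalized inverses $G_{n,i}$, which are not sign-definite; only the $\tau$-combinations $\bm A_n,\bm C_n$ inherit nonnegativity from $\bm P$. In the case relevant to most of our examples, where the family $(\mathbb{P}_n)$ is mutually orthogonal, each $\Pi_n$ is diagonal with strictly positive entries (cf. \eqref{KMcFd}), so $\Pi_n\bm e_{r_n}>0$ and there is nothing to prove. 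For the general case my plan is to argue probabilistically: assuming $\bm P$ irreducible (and recurrent), $\mbox{\boldmath$\pi$}$ is a left $1$-eigenvector of the nonnegative matrix $\bm P$, hence proportional to the essentially unique nonnegative invariant measure; since $\Pi_0=(\int_\Omega w\,dx)^{-1}>0$ fixes the leading component to be positive, the proportionality constant is positive and $\mbox{\boldmath$\pi$}\ge0$. A direct induction on \eqref{formulaca} would be preferable, but the difficulty is exactly that the positivity must emerge from the full $\tau$-weighted sum rather than from any single term, and making this rigorous without invoking irreducibility is the delicate step.
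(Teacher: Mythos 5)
Your verification of $\mbox{\boldmath$\pi$}\bm P=\mbox{\boldmath$\pi$}$ is correct and is essentially the paper's own argument: both rest on $\Pi_nA_{n,i}=C_{n+1,i}^T\Pi_{n+1}$ (a rewriting of \eqref{symm2}), the symmetry of $\Pi_n$ and of $\Pi_nB_{n,i}$, and the stochasticity conditions \eqref{stochcond} -- you merely package the blockwise computation as the global reversibility relation $\bm\Pi\bm P=\bm P^T\bm\Pi$ and deduce the symmetry of $\Pi_nB_{n,i}$ from orthogonality and \eqref{TTRRvv} rather than directly from \eqref{symm1}, which is an equivalent route. For the nonnegativity of the components, where your argument is complete only in the mutually orthogonal (diagonal $\Pi_n$) case and otherwise leans on irreducibility and recurrence, the paper simply cites \cite[Lemma 5.6]{Se06}, so your honest flagging of that step as the delicate one matches how lightly the paper itself treats it.
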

\begin{proof}
From \eqref{Palpha} we can see that $\bm A_n=\sum_{i=1}^d\tau_iA_{n,i}$, $\bm B_n=\sum_{i=1}^d\tau_iB_{n,i}$ and $\bm C_n=\sum_{i=1}^d\tau_iC_{n,i}$. To prove \eqref{IDprop}, we have to check that
$$
\Pi_0\bm B_0+(\Pi_1\bm e_{r_1})^T\bm C_1=\Pi_0,
$$
and
$$
(\Pi_{n-1}\bm e_{r_{n-1}})^T\bm A_{n-1}+(\Pi_{n}\bm e_{r_n})^T\bm B_n+(\Pi_{n+1}\bm e_{r_{n+1}})^T\bm C_{n+1}=(\Pi_n\bm e_{r_n})^T,\quad n\geq1.
$$
The first equality holds using $\Pi_{0}\bm A_{0}=\bm C_{1}^T\Pi_{1}$ (see \eqref{symm2}), that $\Pi_n$ are symmetric matrices and the fact that $\bm P$ is stochastic (see \eqref{stochcond}). Therefore
$$
\Pi_0\bm B_0+\bm e_{r_1}^T\Pi_1^T\bm C_1=\Pi_0\bm B_0+\bm e_{r_1}^T\bm A_0^T\Pi_0=\Pi_0(\bm B_0\bm e_{r_0}+\bm A_0\bm e_{r_1})^T=\Pi_0.
$$
Similarly, for $n\geq1,$ and using additionally \eqref{symm1}, we get
\begin{align*}
\bm e_{r_{n-1}}^T\Pi_{n-1}\bm A_{n-1}&+\bm e_{r_n}^T\Pi_n\bm B_n+\bm e_{r_{n+1}}^T\Pi_{n+1}\bm C_{n+1}=\bm e_{r_{n-1}}^T\bm C_n^T\Pi_n+\bm e_{r_n}^T\bm B_n^T\Pi_n+\bm e_{r_{n+1}}^T\bm A_n^T\Pi_n\\
&=\left( \bm C_n\bm e_{r_{n-1}}+\bm B_n\bm e_{r_n}+\bm A_n\bm e_{r_{n+1}}\right)^T\Pi_n=\bm e_{r_n}^T\Pi_n=(\Pi_n\bm e_{r_n})^T.
\end{align*}
Also observe that by \cite[Lemma 5.6]{Se06} all components of $\mbox{\boldmath$\pi$}$ are nonnegative.
\end{proof}
\begin{remark}
The same result holds for continuous-time QBD processes, where now $\mbox{\boldmath$\pi$}$ satisfies $\mbox{\boldmath$\pi$}\bm{\mathcal{A}}=\bm 0$.
\end{remark}
\begin{remark}
The previous theorem was proved  in \cite{dI1} for QBD processes with a constant number $N$ of phases for each level, i.e. $r_n^d=N,$ for all $n\geq0$.
\end{remark}
\begin{remark}
The invariant measure $\mbox{\boldmath$\pi$}$ in \eqref{ID} will become an invariant distribution if
$$
\sum_{n=0}^\infty\sum_{j=1}^{r_n}\left(\Pi_n\bm e_{r_n}\right)_j^T<\infty.
$$
\end{remark}

Finally, let us talk about the concept of \emph{recurrence}. The definition of recurrence that we will use here is an extension of the one used in \cite{DRSZ,DR}. Consider first the case of discrete-time QBD processes. Then, using \eqref{KMcF} and Lebesgue's theorem, we have
\begin{align*}
\bm H_{i,j}(z)&=\sum_{n=0}^\infty\bm P_{i,j}^nz^n=\sum_{n=0}^\infty\left(\int_\Omega(\tau_1x_1+ \cdots+\tau_dx_d)^nz^n\mathbb{P}_i(x)\mathbb{P}_j^T(x)w(x)dx\right) \Pi_j\\
&=\left(\int_\Omega\frac{1}{1-z(\tau_1x_1+\cdots+\tau_dx_d)} \mathbb{P}_i(x)\mathbb{P}_j^T(x)w(x)dx\right)\Pi_j.
\end{align*}
Observe that each block $(i,j)$ is a matrix of dimension $r_i^d\times r_j^d$. A state $(i,l)$, where $i\in\mathbb{N}_0$ and $0\leq l\leq r_i^d$, is \emph{recurrent} if and only if
\begin{align*}
\sum_{n=0}^\infty e_l^T\bm P_{i,i}^ne_l &=\lim_{z\to1}e_l^T\bm H_{i,i}(z)e_l\\
&=e_l^T\left(\int_\Omega\frac{1}{1-(\tau_1x_1+\cdots+\tau_dx_d)} \mathbb{P}_i(x)\mathbb{P}_i^T(x)w(x)dx\right)\Pi_ie_l=\infty,
\end{align*}
for some $0\leq l\leq r_i^d$, where $e_l^T=(0,\ldots,0,1,0,\ldots,0)$ is the $l$-th canonical vector in $\mathbb{R}^{r_i^d}$. If we assume that the discrete-time QBD process is irreducible, then it is enough to study recurrence at one single state, for instance the state $(0,0)$. In this case we have $r_0^d=1, \mathbb{P}_0(x)=1$ and $\Pi_0=1$. Therefore the discrete-time QBD process is \emph{recurrent} if and only if
\begin{equation}\label{recu}
\int_\Omega\frac{w(x_1,\ldots,x_d)}{1-(\tau_1x_1+\cdots+\tau_dx_d)}dx_1\cdots dx_d=\infty.
\end{equation}
Otherwise it is transient. From the Karlin-McGregor representation \eqref{KMcF} for $i=j=0,$ it is possible to see that the discrete-time QBD process is \emph{positive recurrent} if and only if it is recurrent and the spectral weight $w$ has a jump at least at one point $x^0=(x_1^0,\ldots,x_d^0)$ such that $\tau_1x_1^0+\cdots+\tau_dx_d^0=1$.

Similar results hold for continuous-time QBD processes, but using \eqref{KMcF2} instead. Indeed, the continuous-time QBD process is \emph{recurrent} if and only if
\begin{equation}\label{recu2}
\int_\Omega\frac{w(x_1,\ldots,x_d)}{\tau_1x_1+\cdots+\tau_dx_d}dx_1\cdots dx_d=\infty,
\end{equation}
and it is \emph{positive recurrent} if and only if is recurrent and the spectral weight $w$ has a jump at least at one point $x^0=(x_1^0,\ldots,x_d^0)$ such that $\tau_1x_1^0+\cdots+\tau_dx_d^0=0$.

\section{QBD processes associated with product orthogonal polynomials}\label{SecPOP}

One simple way to generate examples of bivariate orthogonal polynomials is by considering product weight functions  $w$ of the form
$$
w(x,y)=w_1(x)w_2(y),
$$
where $w_1$ and $w_2$ are two one-variable weight functions. It is well known (see Proposition 2.2.1 in \cite{DX14}) that the bivariate polynomials defined by
$$
P_{n,k}(x,y)=p_{n-k}(x)q_k(y),\quad 0\leq k\leq n,
$$
form a mutually orthogonal basis with respect to $w$, where $(p_n)_n$ and $(q_n)_n$ are sequences of orthogonal polynomials with respect to $w_1$ and $w_2$, respectively. Observe that, in this case, $r_n^2=n+1$ where $r_n^d$ is defined by \eqref{knd}.
We will use the vector notation so we define
$$
{\mathbb P}_n(x,y)=\left( P_{n,0}(x,y),  P_{n,1}(x,y),  \dots, P_{n,n} (x,y)\right)^{T},\quad n\geq0.
$$
According to Theorem 3.3.1 in \cite{DX14}, we have that the sequence $({\mathbb P}_n)_{n\geq0}$ satisfies the following three-term recurrence relations:
\begin{equation}\label{TTRR}
\begin{aligned}
x\, {\mathbb P}_n(x,y) & = A_{n,1} {\mathbb P}_{n+1}(x,y)+ B_{n,1} {\mathbb P}_n(x,y) + C_{n,1} {\mathbb P}_{n-1}(x,y), \\
y\, {\mathbb P}_n(x,y) & = A_{n,2} {\mathbb P}_{n+1}(x,y)+ B_{n,2} {\mathbb P}_n(x,y) + C_{n,2} {\mathbb P}_{n-1}(x,y),
\end{aligned}
\end{equation}
where $A_{n,1}, A_{n,2}$ are matrices of dimension $(n+1)\times(n+2)$, $B_{n,1}, B_{n,2}$ are matrices of dimension $(n+1)\times(n+1)$, $C_{n,1}, C_{n,2}$ are matrices of dimension $(n+1)\times n$ and they satisfy the rank conditions \eqref{rankc1} and \eqref{rankc2}. From these recurrence relations we can define the \emph{block Jacobi matrices}

\begin{equation}\label{JacMat}
J_{1}=\left(\begin{array}{cccccc}
B_{0,1}   & A_{0,1}   &         &       &\bigcirc \\
C_{1,1}   &B_{1,1}   & A_{1,1} &       &          \\
          &C_{2,1}   & B_{2,1} &A_{2,1} &         \\
  \bigcirc         &           & \ddots  & \ddots & \ddots
\end{array}\right),\quad J_{2}=\left(\begin{array}{cccccc}
B_{0,2}   & A_{0,2}   &         &       &\bigcirc \\
C_{1,2}   &B_{1,2}   & A_{1,2} &       &          \\
          &C_{2,2}   & B_{2,2} &A_{2,2} &         \\
  \bigcirc         &           & \ddots  & \ddots & \ddots
\end{array}\right).
\end{equation}

If we have the three-term recurrence relations satisfied by the polynomials $(p_n)_n$ and $(q_n)_n$, i.e.
\begin{equation*}
\begin{split}
xp_n&=a_np_{n+1}+b_np_n+c_np_{n-1},\quad p_{-1}=0,\\
xq_n&=\tilde a_nq_{n+1}+\tilde b_nq_n+\tilde c_nq_{n-1},\quad q_{-1}=0,
\end{split}
\end{equation*}
then we have that the coefficients $A_{n,i},B_{n,i},C_{n,i}, i=1,2,$ in \eqref{TTRR} are given by
\begin{equation}\label{coefdiag}
\begin{split}
A_{n,1}&=\left[
\begin{array}{cccc}
a_{n} &  &  \bigcirc   & 0  \\
        & \ddots    &        &    \vdots \\
     \bigcirc   &            & a_0 &   0
     \end{array}
\right],\quad B_{n,1}=\left[
\begin{array}{ccc}
b_{n} &  &  \bigcirc    \\
        & \ddots    &    \\
     \bigcirc   &            & b_0
     \end{array}
\right],\quad C_{n,1}=\left[
\begin{array}{ccc}
c_{n} &  &  \bigcirc   \\
         & \ddots    &        \\
      \bigcirc           & &   c_1 \\
      0 &\cdots & 0
     \end{array}
\right],
\\
A_{n,2}&=\left[
\begin{array}{cccc}
0 & \tilde a_{0} & & \bigcirc    \\
    \vdots &    & \ddots    &    \\
    0& \bigcirc   &        & \tilde a_n
     \end{array}
\right],\quad B_{n,2}=\left[
\begin{array}{ccc}
\tilde b_{0} &  &  \bigcirc    \\
        & \ddots    &    \\
     \bigcirc   &            & \tilde b_n
     \end{array}
\right],\quad C_{n,2}=\left[
\begin{array}{ccc}
0 &  &  \bigcirc   \\
    \tilde c_{1}     &    &        \\
                 &\ddots & 0   \\
      \bigcirc & & \tilde c_n
     \end{array}
\right].
\end{split}
\end{equation}
These are the simplest examples since both variables are separated. Now, we will see a couple of examples related with QBD processes.

\subsection{Product Jacobi polynomials}

Let $Q_n^{(\alpha,\beta)}(x)$ be the family of Jacobi polynomials normalized in such a way that $Q_n^{(\alpha,\beta)}(1)=1$. They are orthogonal with respect to the (normalized) Jacobi weight (or Beta distribution)
$$
w(x)= \frac{\Gamma(\alpha+\beta+2)}{\Gamma(\alpha+1)\Gamma(\beta+1)}x^\alpha(1-x)^\beta,\quad x\in[0,1],\quad \alpha,\beta>-1,
$$
and they satisfy the following three-term recurrence relation
$$
xQ_n^{(\alpha,\beta)}(x)=a_n^{(\alpha,\beta)}Q_{n+1}^{(\alpha,\beta)}(x)+b_n^{(\alpha,\beta)}Q_{n}^{(\alpha,\beta)}(x)+c_n^{(\alpha,\beta)}Q_{n-1}^{(\alpha,\beta)}(x),
$$
where
\begin{equation}\label{coeffJ1}
\begin{split}
a_n^{(\alpha,\beta)}&=\frac{(n+\beta+1)(n+\alpha+\beta+1)}{(2n+\alpha+\beta+1)(2n+\alpha+\beta+2)},\\
b_n^{(\alpha,\beta)}&=1-a_n^{(\alpha,\beta)}-c_n^{(\alpha,\beta)},\\
c_n^{(\alpha,\beta)}&=\frac{n(n+\alpha)}{(2n+\alpha+\beta)(2n+\alpha+\beta+1)}.
\end{split}
\end{equation}

Let us define an inner product on the square $S=[0,1]\times[0,1]$ by
$$
\langle f,g \rangle = \frac{\Gamma(\alpha+\beta+2)\Gamma(\gamma+\delta+2)}{\Gamma(\alpha+1)\Gamma(\beta+1)\Gamma(\gamma+1)\Gamma(\delta+1)} \int_S f(x,y) g(x,y) x^\alpha (1-x)^\beta y^\gamma (1-y)^\delta dx dy,
$$
which is normalized in such a way that $\langle 1,1 \rangle=1$. For $0\le k\le n$ the set of polynomials
\begin{equation}\label{qsjac}
Q_{n,k}(x,y)=Q_{n-k}^{(\alpha,\beta)}(x) Q_k^{(\gamma,\delta)}(y),
\end{equation}
constitutes a basis of the space of orthogonal polynomials of degree $n$ with $Q_{n,k}(1,1)=1$. The vector of polynomials
${\mathbb Q}_n=\left( Q_{n,0},  Q_{n,1}, \dots, Q_{n,n} \right)^{T}$ satisfy the three-term recurrence relations
\begin{equation*}
\begin{aligned}
x\, {\mathbb Q}_n(x,y) & = A_{n,1} {\mathbb Q}_{n+1}(x,y)+ B_{n,1} {\mathbb Q}_n(x,y) + C_{n,1} {\mathbb Q}_{n-1}(x,y), \\
y\, {\mathbb Q}_n(x,y) & = A_{n,2} {\mathbb Q}_{n+1}(x,y)+ B_{n,2} {\mathbb Q}_n(x,y) + C_{n,2} {\mathbb Q}_{n-1}(x,y),
\end{aligned}
\end{equation*}
where $A_{n,i}, B_{n,i}, C_{n,i}, i=1,2,$ are given by \eqref{coefdiag} (for $a_n=a_n^{(\alpha,\beta)}, b_n=b_n^{(\alpha,\beta)}, c_n=c_n^{(\alpha,\beta)},$ and $\tilde a_n=a_n^{(\gamma,\delta)}, \tilde b_n=b_n^{(\gamma,\delta)}, \tilde c_n=c_n^{(\gamma,\delta)}$). Observe that the Jacobi matrices $J_1$ and $J_2$ are both \emph{stochastic matrices}. Now, let us consider a Jacobi matrix of the form \eqref{Palpha}, i.e.
$\bm P=\tau_1J_1+\tau_2J_2.$ Since $J_1$ and $J_2$ are both stochastic matrices, the Jacobi matrix $\bm P$ is always a stochastic matrix if and only if $\tau_2=1-\tau_1$ and $0\leq \tau_1\leq1$. For simplicity we will call $\tau=\tau_1$. Therefore,
\begin{equation*}
\bm P=\tau J_1+(1-\tau) J_2,\quad0\leq \tau\leq1,
\end{equation*}
can be regarded as the transition probability matrix of a family of discrete-time QBD processes. Thus, the Karlin-McGregor representation formula \eqref{KMcF} for the $(i,j)$ block entry of the matrix $\bm P$ is given by
\begin{equation*}\label{KMcPJ}
\bm P_{i,j}^n=C\left(\int_{S}[\tau x+(1-\tau)y]^n\mathbb{Q}_i(x,y)\mathbb{Q}_j^T(x,y) x^\alpha (1-x)^\beta y^\gamma (1-y)^\delta dxdy\right)\Pi_j,
\end{equation*}
where
$$
C=\frac{\Gamma(\alpha+\beta+2)\Gamma(\gamma+\delta+2)}{\Gamma(\alpha+1)\Gamma(\beta+1)\Gamma(\gamma+1)\Gamma(\delta+1)},
$$
and $\Pi_j$ is a diagonal matrix whose entries are given by
\begin{equation*}\label{Pisp}
\begin{split}
\Pi_{j,k}&=\frac{\sigma_{j,k}^2 }{\nu_{j,k}}, \quad k=0,1,\ldots,j, \quad\sigma_{j,k}=\frac{(\beta+1)_{j-k} (\delta+1)_k}{(j-k)!\,k!}
\\
\nu_{j,k}&=\frac{C\times\Gamma(j-k+\alpha+1)\Gamma(j-k+\beta+1)\Gamma(k+\gamma+1) \Gamma(k+\delta+1)}{(2j-2k+\alpha+\beta+1)(2k+\gamma+\delta+1)(j-k)!\, \Gamma(j-k+\alpha+\beta+1)k! \,\Gamma(k+\gamma+\delta+1)}.
\end{split}
\end{equation*}
From \eqref{KMcFd} and \eqref{qsjac} we can derive a separated expression for all probabilities, given by
\begin{align*}
\left(\bm P_{i,j}^n\right)_{i',j'}=&C\times\Pi_{j,j'}\sum_{k=0}^n\binom{n}{k}\tau^k(1-\tau)^{n-k}\left[\int_0^1Q_{i-i'}^{(\alpha,\beta)}(x)Q_{j-j'}^{(\alpha,\beta)}(x)x^{\alpha+k}(1-x)^{\beta}dx\right]\\
&\hspace{2cm}\times\left[\int_0^1Q_{i'}^{(\gamma,\delta)}(y)Q_{j'}^{(\gamma,\delta)}(y)y^{\gamma+n-k}(1-y)^{\delta}dy\right].
\end{align*}

According to Theorem \ref{Teo} we can construct an invariant measure $\bm\pi$ for the QBD process given by \eqref{ID}. The family of discrete-time QBD processes is recurrent (see \eqref{recu}) if and only if
$$
\int_S\frac{x^\alpha (1-x)^\beta y^\gamma (1-y)^\delta}{1-\tau x-(1-\tau)y}dxdy=\infty.
$$
After some computations, it turns out that, if $0<\tau<1$, this integral is divergent if and only if $\beta+\delta\leq-1$. If $\tau=1$ the divergence is equivalent to $\beta\leq0$ and if $\tau=0$ the divergence is equivalent to $\delta\leq0$. Otherwise the QBD process is transient. The QBD process can never be positive recurrent since the spectral measure is absolutely continuous and does not have any jumps. From the shape of the coefficients $A_{n,i}, B_{n,i}, C_{n,i}, i=1,2$ a diagram of the possible transitions of the QBD process generated by $\bm P$ is given in Figure 1.

\begin{figure}
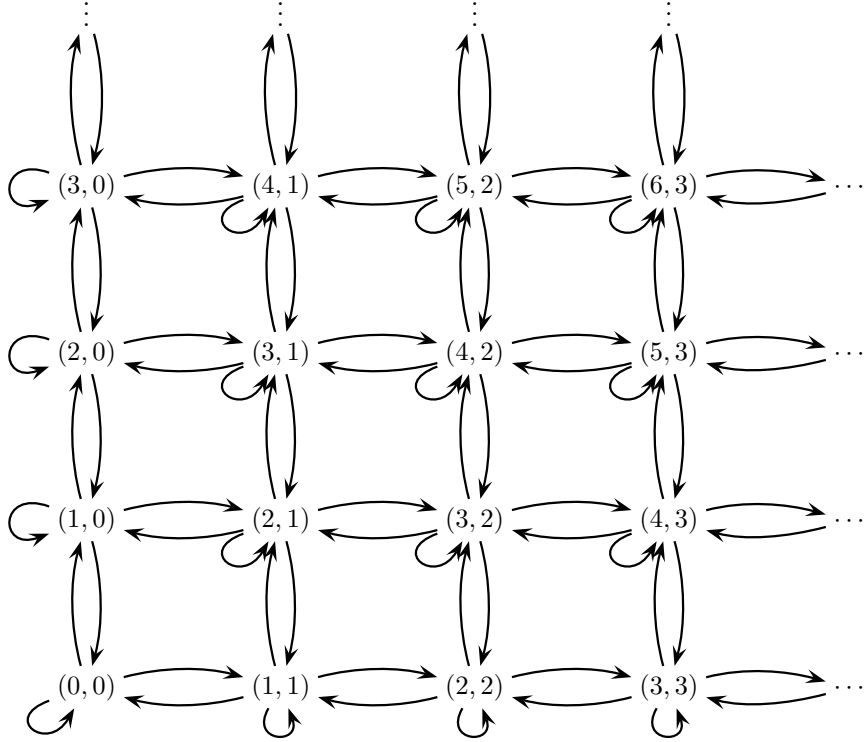

\begin{center}
$$\begin{psmatrix}[rowsep=1.8cm,colsep=1.8cm]
\rnode{21}{\vdots}&\rnode{22}{\vdots}&\rnode{23}{\vdots}&\rnode{24}{\vdots}&\\
\rnode{16}{(3,0)}&\rnode{17}{(4,1)}&\rnode{18}{(5,2)}&\rnode{19}{(6,3)}&\rnode{20}{\Huge{\cdots}}\\
\rnode{11}{(2,0)}&\rnode{12}{(3,1)}&\rnode{13}{(4,2)}&\rnode{14}{(5,3)}&\rnode{15}{\Huge{\cdots}}\\
\rnode{6}{(1,0)}&\rnode{7}{(2,1)}&\rnode{8}{(3,2)}&\rnode{9}{(4,3)}&\rnode{10}{\Huge{\cdots}}\\
\rnode{1}{(0,0)}&\rnode{2}{(1,1)}&\rnode{3}{(2,2)}&\rnode{4}{(3,3)}&\rnode{5}{\Huge{\cdots}}
\psset{nodesep=3pt,arcangle=15,labelsep=2ex,linewidth=0.3mm,arrows=->,arrowsize=1mm
3}
\nccurve[angleA=160,angleB=200,ncurv=4]{6}{6}
\nccurve[angleA=160,angleB=200,ncurv=4]{11}{11}
\nccurve[angleA=160,angleB=200,ncurv=4]{16}{16}
\nccurve[angleA=200,angleB=240,ncurv=4]{1}{1}
\nccurve[angleA=200,angleB=240,ncurv=4]{7}{7}
\nccurve[angleA=200,angleB=240,ncurv=4]{8}{8}
\nccurve[angleA=200,angleB=240,ncurv=4]{9}{9}
\nccurve[angleA=200,angleB=240,ncurv=4]{12}{12}
\nccurve[angleA=200,angleB=240,ncurv=4]{13}{13}
\nccurve[angleA=200,angleB=240,ncurv=4]{14}{14}
\nccurve[angleA=200,angleB=240,ncurv=4]{17}{17}
\nccurve[angleA=200,angleB=240,ncurv=4]{18}{18}
\nccurve[angleA=200,angleB=240,ncurv=4]{19}{19}
\nccurve[angleA=240,angleB=290,ncurv=4]{2}{2}
\nccurve[angleA=240,angleB=290,ncurv=4]{3}{3}
\nccurve[angleA=240,angleB=290,ncurv=4]{4}{4}
\ncarc{1}{2}\ncarc{2}{1}\ncarc{3}{2}\ncarc{2}{3}\ncarc{3}{4}\ncarc{4}{3}\ncarc{5}{4}\ncarc{4}{5}
\ncarc{1}{6}\ncarc{6}{1}\ncarc{2}{7}\ncarc{7}{2}\ncarc{8}{3}\ncarc{3}{8}\ncarc{9}{4}\ncarc{4}{9}
\ncarc{6}{7}\ncarc{7}{6}\ncarc{8}{7}\ncarc{7}{8}\ncarc{8}{9}\ncarc{9}{8}\ncarc{9}{10}\ncarc{10}{9}
\ncarc{6}{11}\ncarc{11}{6}\ncarc{12}{7}\ncarc{7}{12}\ncarc{8}{13}\ncarc{13}{8}\ncarc{9}{14}\ncarc{14}{9}
\ncarc{11}{12}\ncarc{12}{11}\ncarc{13}{12}\ncarc{12}{13}\ncarc{13}{14}\ncarc{14}{13}\ncarc{15}{14}\ncarc{14}{15}
\ncarc{11}{16}\ncarc{16}{11}\ncarc{17}{12}\ncarc{12}{17}\ncarc{13}{18}\ncarc{18}{13}\ncarc{19}{14}\ncarc{14}{19}
\ncarc{16}{17}\ncarc{17}{16}\ncarc{18}{17}\ncarc{17}{18}\ncarc{18}{19}\ncarc{19}{18}\ncarc{19}{20}\ncarc{20}{19}
\ncarc{16}{21}\ncarc{21}{16}\ncarc{22}{17}\ncarc{17}{22}\ncarc{18}{23}\ncarc{23}{18}\ncarc{19}{24}\ncarc{24}{19}
\end{psmatrix}
$$
\end{center}
\caption{Diagram of all possible transitions of the discrete-time QBD process corresponding with the product Jacobi polynomials on a square.}
\end{figure}

An interpretation of this QBD process in terms of \emph{urn models} may be stated as follows. Consider two independent urn models for the scalar Jacobi polynomials (see \cite{G3}, or more recently \cite{GdI1}). The first urn model depends on the parameters $\alpha,\beta$ and the second urn model depends on the parameters $\gamma,\delta$, where $\alpha,\beta,\gamma,\delta$ are assumed to be nonnegative integers. The parameter $\tau$ may be interpreted as the probability of heads of a (possible biased) coin which we tose before starting the QBD process. The state space of the discrete-time QBD process $\{Z_t : t=0,1,\ldots\}$ is given by all pairs $(n,k)$ where $n\in\mathbb{N}_ 0$ and $0\leq k\leq n$. The numbers $n-k$ and $k$ can be interpreted as the number of blue balls in each of the two independent urn models, being $n$ the total number of blue balls in both models. From a state $(n,k)$ there are five possible transitions between the states, except when we are in states of the form $(n,0)$ and $(n,n)$, where we only have 3 possible transitions (see Figure 1). These five transitions are given by
\begin{equation}\label{tppopj}
\begin{split}
\mathbb{P}\left[Z_1=(n+1,k+1)\; |\; Z_0=(n,k)\right]&=(1-\tau)a_k^{(\gamma,\delta)},\\
\mathbb{P}\left[Z_1=(n+1,k)\; |\; Z_0=(n,k)\right]&=\tau a_{n-k}^{(\alpha,\beta)},\\
\mathbb{P}\left[Z_1=(n-1,k)\; |\; Z_0=(n,k)\right]&=\tau c_{n-k}^{(\alpha,\beta)},\\
\mathbb{P}\left[Z_1=(n-1,k-1)\; |\; Z_0=(n,k)\right]&=(1-\tau)c_k^{(\gamma,\delta)},\\
\mathbb{P}\left[Z_1=(n,k)\; |\; Z_0=(n,k)\right]&=\tau b_{n-k}^{(\alpha,\beta)}+(1-\tau)b_k^{(\gamma,\delta)},
\end{split}
\end{equation}
where the coefficients $a_n,b_n,c_n$ are given by \eqref{coeffJ1}. This means that the increase or decrease of one blue ball at the first urn model (and no changes in the second urn model) only depends on $n-k$ (and $\alpha,\beta$). On the other hand, the increase or decrease of one blue ball at the second urn model only depends on $k$ (and $\gamma,\delta$). Therefore both components behave independently. Observe that since we are assuming that $\alpha,\beta,\gamma,\delta$ are nonnegative integers, the QBD process $\{Z_t : t=0,1,\ldots\}$ can only be (null) recurrent if and only if $\tau=1,\beta=0$ (i.e. the second urn is ignored and $\beta=0$) or $\tau=0,\delta=0$ (i.e. the first urn is ignored and $\delta=0$). Otherwise, the QBD process is transient.

\begin{remark}
Observe that we could have relabeled the states in the form $(h,k), h,k\in\mathbb{N}_0$, where $h=n-k$, and in this way it is more clear that the transitions act independently on both components. This relabeling of the states keeps the same transitions in the diagram in Figure 1, but it will considerably change in the examples of orthogonal polynomials on a parabolic domain and on the triangle in Sections \ref{secpar} and \ref{sectri}, respectively.
\end{remark}

\begin{remark}
In the previous situation we have normalized the polynomials at the upper right corner $(1,1)$ of the square $S=[0,1]\times[0,1]$ in such a way that $Q_{n,k}(1,1)=1$. It is possible to see that we can also get probabilistic interpretations of this example if we normalize the polynomials at any corner of the unit square. For instance, if we choose to normalize the polynomials in such a way that $Q_{n,k}(0,0)=1$, then we will obtain a two-parameter family of continuous-time QBD processes with infinitesimal generators $\bm{\mathcal{A}}=\tau_1J_1+\tau_2J_2$, with $\tau_1,\tau_2\geq0$ (observe that the coefficients in $J_1$ and $J_2$ will change after the normalization of the polynomials). The same can be done for the points $(1,0)$ and $(0,1)$ where now we will get one-parameter families of discrete-time QBD processes where the free parameter will depend on the values of $\alpha,\beta,\gamma,\delta$. We will see a similar situation later in Section \ref{sectri}.
\end{remark}

\subsection{Product Laguerre polynomials}\label{secplpl}

Let $L_n^{(\alpha)}(x)$ be the classical family of Laguerre polynomials normalized in such a way that
$$
L_n^{(\alpha)}(0)=\binom{n+\alpha}{n}.
$$
They are orthogonal with respect to the (normalized) Laguerre weight (or Gamma distribution)
$$
w(x)= \frac{1}{\Gamma(\alpha+1)}x^\alpha e^{-x},\quad x\in[0,\infty),\quad \alpha>-1,
$$
and they satisfy the following three-term recurrence relation
$$
-xL_n^{(\alpha)}(x)=a_n^{(\alpha)}L_{n+1}^{(\alpha)}(x)+b_n^{(\alpha)}L_{n}^{(\alpha)}(x)+c_n^{(\alpha)}L_{n-1}^{(\alpha)}(x),
$$
where
\begin{equation}\label{coeffL1}
a_n^{(\alpha)}=n+1,\quad b_n^{(\alpha)}=-(2n+\alpha+1),\quad c_n^{(\alpha)}=n+\alpha.
\end{equation}
Let us define an inner product on the first quadrant $\mathcal{C}=[0,\infty)\times[0,\infty)$ by
$$
\langle f,g \rangle = \frac{1}{\Gamma(\alpha+1)\Gamma(\beta+1)} \int_\mathcal{C} f(x,y) g(x,y) x^\alpha y^\beta e^{-x-y} dx dy,
$$
which is normalized in such a way that $\langle 1,1 \rangle=1$. For $0\le k\le n$ the set of polynomials
\begin{equation}\label{plp}
Q_{n,k}(x,y)=L_{n-k}^{(\alpha)}(x) L_k^{(\beta)}(y),
\end{equation}
constitutes a basis of the space of orthogonal polynomials of degree $n$ with
$$
Q_{n,k}(0,0)=\binom{n-k+\alpha}{\alpha}\binom{k+\beta}{k}.
$$
The vector of polynomials
${\mathbb Q}_n=\left( Q_{n,0}, Q_{n,1}, \dots,  Q_{n,n} \right)^{T}$ satisfy the three-term recurrence relations
\begin{equation*}
\begin{aligned}
-x\, {\mathbb Q}_n(x,y) & = A_{n,1} {\mathbb Q}_{n+1}(x,y)+ B_{n,1} {\mathbb Q}_n(x,y) + C_{n,1} {\mathbb Q}_{n-1}(x,y), \\
-y\, {\mathbb Q}_n(x,y) & = A_{n,2} {\mathbb Q}_{n+1}(x,y)+ B_{n,2} {\mathbb Q}_n(x,y) + C_{n,2} {\mathbb Q}_{n-1}(x,y),
\end{aligned}
\end{equation*}
where $A_{n,i}, B_{n,i}, C_{n,i}, i=1,2,$ are given by \eqref{coefdiag} (for $a_n=a_n^{(\alpha)}, b_n=b_n^{(\alpha)}, c_n=c_n^{(\alpha)},$ and $\tilde a_n=a_n^{(\beta)}, \tilde b_n=b_n^{(\beta)}, \tilde c_n=c_n^{(\beta)}$). Observe that the Jacobi matrices $J_1$ and $J_2$ are both the (\emph{nonconservative}) infinitesimal operator of a continuous-time (diagonal) QBD process. Now, let us consider a Jacobi matrix of the form \eqref{Palpha}, i.e.
$\bm{\mathcal{A}}=\tau_1J_1+\tau_2J_2.$ The Jacobi matrix $\bm{\mathcal{A}}$ is always the infinitesimal operator of a continuous-time QBD process if and only if $\tau_1, \tau_2\geq0$. Thus, the Karlin-McGregor representation formula \eqref{KMcF2} for the $(i,j)$ block entry of the transition function matrix $\bm P(t)$ is given by
\begin{equation*}\label{KMcPL}
\bm P_{i,j}(t)=\frac{1}{\Gamma(\alpha+1)\Gamma(\beta+1)}\left(\int_\mathcal{C} e^{-(\tau_1x+\tau_2y)t}\mathbb{Q}_i(x,y)\mathbb{Q}_j^T(x,y)x^{\alpha} y^{\beta} e^{-x-y}dxdy\right)\Pi_j,
\end{equation*}
where
$\Pi_j$ is a diagonal matrix whose entries are given by
\begin{equation*}\label{Pisll}
\Pi_{j,k}=\frac{\Gamma(\alpha+1)\Gamma(\beta+1)(j-k)! \, k!}{\Gamma(j-k+\alpha+1) \Gamma(k+\beta+1)}, \qquad k=0,1,\ldots,j.
\end{equation*}
As before, from \eqref{KMcFd2} and \eqref{plp} we can derive a separated expression for all probabilities, given by
\begin{align*}
\left(\bm P_{i,j}(t)\right)_{i',j'}=&\Pi_{j,j'}\left[\int_0^\infty e^{-\tau_1xt}L_{i-i'}^{(\alpha)}(x)L_{j-j'}^{(\alpha)}(x)x^{\alpha}e^{-x}dx\right]\left[\int_0^\infty e^{-\tau_2yt}L_{i'}^{(\beta)}(y)L_{j'}^{(\beta)}(y)y^{\beta}e^{-y}dy\right].
\end{align*}

According to Theorem \ref{Teo} we can construct an invariant measure $\bm\pi$ for the QBD process given by \eqref{ID}. Finally, the family of continuous-time QBD processes is recurrent (see \eqref{recu2}) if and only if
$$
\int_\mathcal{C}\frac{x^\alpha y^\beta e^{-x-y}}{\tau_1x+\tau_2y}dxdy=\infty.
$$
After some computations, it turns out that, if $\tau_1,\tau_2>0$, this integral is divergent if and only if $\alpha+\beta\leq-1$. If $\tau_1=0$ the divergence is equivalent to $\beta\leq0$ and if $\tau_2=0$ the divergence is equivalent to $\alpha\leq0$. Otherwise the QBD process is transient. Again, the QBD process can never be positive recurrent since the spectral measure is absolutely continuous and does not have any jumps. A diagram of the possible transitions of the QBD process generated by $\bm{\mathcal{A}}$ is similar to the one given in Figure 1, but without self-transitions.

An interpretation of this QBD process is similar to the situation considered in the previous case of product Jacobi polynomials, but changing the urn models by two independent \emph{linear growth models}, similar to the models studied in \cite{KMc7, Mil}), but in these papers both components are dependent of each other. The parameters $\tau_1,\tau_2\geq0$ may be interpreted as an initial preference of choosing either one of these linear growth models. Again, the state space of the continuous-time QBD process $\{Z_t : t\geq0\}$ is given by all pairs $(n,k)$ where $n\in\mathbb{N}_ 0$ and $0\leq k\leq n$ and now $n-k$ and $k$ can be interpreted as the number of elements in the population in each of the models. From a state $(n,k)$ there are four possible transitions between the states, except when we are in states of the form $(n,0)$ and $(n,n)$, where we only have 2 possible transitions (see Figure 1). As in \eqref{tppopj}, during an interval $(t,t+h)$ of infinitesimal length $h>0$, the infinitesimal birth and death rates of the process are given by
\begin{equation*}
\begin{split}
\mathbb{P}\left[Z_{t+h}=(n+1,k+1)\; |\; Z_t=(n,k)\right]&=\tau_2a_k^{(\beta)}h+o(h),\\
\mathbb{P}\left[Z_{t+h}=(n+1,k)\; |\; Z_t=(n,k)\right]&=\tau_1a_{n-k}^{(\alpha)}h+o(h),\\
\mathbb{P}\left[Z_{t+h}=(n-1,k)\; |\; Z_t=(n,k)\right]&=\tau_1c_{n-k}^{(\alpha)}h+o(h),\\
\mathbb{P}\left[Z_{t+h}=(n-1,k-1)\; |\; Z_t=(n,k)\right]&=\tau_2c_k^{(\beta)}h+o(h),
\end{split}
\end{equation*}
where the coefficients $a_n,b_n,c_n$ are given by \eqref{coeffL1}. Again we can see from the coefficients that both components behave independently. Now, if we assume that we do not ignore any of the populations (i.e. $\tau_1,\tau_2>0$), it is possible that the QBD process is  (null) recurrent if we choose negative $\alpha,\beta$ such that $\alpha+\beta\leq-1$. Another important observation now is that there is a positive probability that the QBD process is killed if the process is located at one of the states of the form $(n,0)$ or $(n,n)$ for $n\geq0$ (i.e. the boundary in the grid in Figure 1).

\begin{remark}
Observe that we could have taken another normalization of the polynomials $Q_{n,k}(x,y)$ in \eqref{plp} in such a way that $Q_{n,k}(0,0)=1$. In that situation, the coefficients of the three-term recurrence relation for the new normalized Laguerre polynomials are $a_n^{(\alpha)}=n+\alpha+1, b_n^{(\alpha)}=-(2n+\alpha+1), c_n^{(\alpha)}=n$. Then, we will obtain another family of continuous-time QBD processes. The only difference is that this model is \emph{conservative}, meaning that the process will evolve always in time and will never stop, unlike the case we studied before.
\end{remark}

\begin{remark}\label{remJL}
It is also possible to consider product Jacobi-Laguerre polynomials, in which case the region is given by the strip $\mathcal{S}=[0,1]\times[0,\infty)$. After a proper normalization of the polynomials, it is possible to see that the only corner for which we get a probabilistic interpretation of this example is $(0,0)$, but not $(0,1)$. This is due to the fact that the coefficients of the three-term recurrence relation for the Laguerre polynomials are unbounded, contrary to the coefficients for the Jacobi polynomials.
\end{remark}

\section{QBD processes associated with orthogonal polynomials on a parabolic domain}\label{secpar}

In \cite{Ko75}, T. Koornwinder studied analogues of Jacobi orthogonal polynomials in two variables. In particular, he established seven different classes of bivariate orthogonal polynomials, some of them obtained by using a construction defined by Agahanov in \cite{Ag65}. One of these classes are orthogonal polynomials on the domain
$$
R=\{(x,y)\in \mathbb{R}^2: y^2 < x < 1 \},
$$
bounded by a straight line and a parabola. For $ \alpha, \beta >- 1,$ the inner product is given by the integral
$$
   \langle f,g \rangle =\frac{\Gamma(\alpha+\beta+\frac{5}{2})}{\sqrt{\pi}\, \Gamma(\alpha+1)\Gamma(\beta+1)} \int_{R}\, f(x,y)\, g(x,y) \, (1-x)^{\alpha}(x-y^2)^{\beta} \, dx\, dy,
$$
where the weight is normalized in such a way that $ \langle 1,1 \rangle=1$. A mutually orthogonal basis of polynomials $\{P_{n,k} : 0\leq k\leq n \}$ can be obtained from a modified product of Jacobi polynomials in this way
\begin{equation}\label{qsjacp}
P_{n,k}(x,y)=P_{n-k}^{(\alpha,\beta+k+1/2)}(2x-1)\, x^{k/2}\,P_k^{(\beta,\beta)}\left(\frac{y}{\sqrt{x}} \right).
\end{equation}
Here $P_n^{(\alpha,\beta)}(t)$ are the standard Jacobi polynomials (see \cite[Chapter 22]{AS72} or \cite{Sz78}). For $\alpha, \beta > -1$, the Jacobi
polynomials are orthogonal with respect to the weight function
\begin{equation}\label{pesoJ}
   w_{\alpha,\beta}(t) = (1-t)^\alpha(1+t)^{\beta}, \qquad  -1< t < 1,
\end{equation}
and they satisfy the properties
\begin{equation} \label{jac-norm}
P_n^{(\alpha,\beta)}(1) = {n+\alpha\choose n}= \frac{(\alpha+1)_n}{n!},\quad P_n^{(\alpha,\beta)}(-1) = (-1)^n {n+\beta\choose n} = (-1)^n \frac{(\beta+1)_n}{n!}.
\end{equation}

We look for a basis of polynomials $\{Q_{n,k} : 0\leq k\leq n \}$ satisfying $Q_{n,k}(1,1)=1$ so, if we denote
\begin{equation}\label{sigpar}
\sigma_{n,k}=P_{n,k}(1,1)=P_{n-k}^{(\alpha,\beta+k+1/2)}(1)\, P_k^{(\beta,\beta)}(1)=\frac{(\alpha +1)_{n-k}}{(n-k)!} \frac{(\beta+1)_{k}}{k!},
\end{equation}
and we define $Q_{n,k}(x,y)=\sigma_{n,k}^{-1}P_{n,k}(x,y),$ the condition holds. We can use vector notation and the vector polynomials ${\mathbb Q}_n=\left( Q_{n,0},  Q_{n,1}, \dots,  Q_{n,n} \right)^{T}
$ satisfy the three-term recurrence relations
\begin{equation*}\label{TTRRpar}
\begin{aligned}
x\, {\mathbb Q}_n(x,y) & = A_{n,1} {\mathbb Q}_{n+1}(x,y)+ B_{n,1} {\mathbb Q}_n(x,y) + C_{n,1} {\mathbb Q}_{n-1}(x,y), \\
y\, {\mathbb Q}_n(x,y) & = A_{n,2} {\mathbb Q}_{n+1}(x,y)+ B_{n,2} {\mathbb Q}_n(x,y) + C_{n,2} {\mathbb Q}_{n-1}(x,y),
\end{aligned}
\end{equation*}
where the Jacobi matrices have a special shape (see \cite{DX14, MPP17}). On one side, the matrices $A_{n,1}$, $B_{n,1}$ and $C_{n,1}$ are diagonal matrices:
\begin{equation}\label{ABC1}
\begin{array}{c}
A_{n,1}=\left[
\begin{array}{ccccccc}
a_{n,0} &            &        &             & 0  \\
        & a_{n,1}    &        &             & \vdots \\
        &            & \ddots &             &         \\
        &            &        & a_{n,n}   & 0
\end{array}
\right],
\\[1cm]
B_{n,1}=\left[
\begin{array}{ccccccc}
b_{n,0}     &            &        &          \\
            & b_{n,1}    &        &           \\
            &            & \ddots &        \\
            &            &        & b_{n,n}
\end{array}
\right],
\quad
C_{n,1}=\left[
\begin{array}{cccccccc}
c_{n,0}     &            &        &               \\
            & c_{n,1}    &        &               \\
            &            & \ddots &             \\
            &            &        & c_{n,n-1} \\
     0      &            & \dots   &  0
\end{array}
\right].
\end{array}
\end{equation}
On the other side, the matrices $A_{n,2}$, $B_{n,2}$ and $C_{n,2}$ are tridiagonal matrices:

\begin{equation}\label{ABC2}
\begin{array}{c}
A_{n,2}=\left[
\begin{array}{ccccccc}
a_{n,0}^{(2)} & a_{n,0}^{(3)}&         &          &  \\
a_{n,1}^{(1)} & a_{n,1}^{(2)}& a_{n,1}^{(3)} &         &    \\
        & \ddots & \ddots  & \ddots  &         \\
 &        & a_{n,n}^{(1)} & a_{n,n}^{(2)} & a_{n,n}^{(3)}
\end{array}
\right],
\quad
B_{n,2}=\left[
\begin{array}{ccccccc}
b_{n,0}^{(2)} & b_{n,0}^{(3)}&         &        &  \\
b_{n,1}^{(1)} & b_{n,1}^{(2)}& b_{n,1}^{(3)} &        &           \\
              & \ddots & \ddots  & \ddots &           \\
              &        & b_{n,n-1}^{(1)} &  b_{n,n-1}^{(2)} & b_{n,n-1}^{(3)}  \\
              &        &       &  b_{n,n}^{(1)}     & b_{n,n}^{(2)}
\end{array}
\right],
\\[1cm]
C_{n,2}=\left[
\begin{array}{ccccccc}
c_{n,0}^{(2)} & c_{n,0}^{(3)}&         &          &  \\
c_{n,1}^{(1)} & c_{n,1}^{(2)}& c_{n,1}^{(3)} &         &    \\
        & \ddots & \ddots  & \ddots  &         \\
         &        & c_{n,n-2}^{(1)} & c_{n,n-2}^{(2)} & c_{n,n-2}^{(3)} \\
         &        &        &  c_{n,n-1}^{(1)} & c_{n,n-1}^{(2)} \\
        &        &        &         & c_{n,n}^{(1)}
\end{array}
\right].
\end{array}
\end{equation}
The elements in the coefficients $A_{n,1}$, $B_{n,1}$ and $C_{n,1}$ are given by
$$
\begin{aligned}
a_{n,k}&=\frac{(n-k+\alpha+1)(n+\alpha+\beta+3/2)}{(2n-k+\alpha+\beta+3/2)_2},\quad k=0,1,\ldots,n, \\
b_{n,k}&=\frac{(n-k+1)(n-k+\alpha+1)}{(2n-k+\alpha+\beta+3/2)_2} +\frac{(n+\alpha+\beta+1/2)(n+\beta+1/2)}{(2n-k+\alpha+\beta+1/2)_2},\quad k=0,1,\ldots,n,
\\
c_{n,k}&=\frac{(n-k)(n+\beta+1/2)}{(2n-k+\alpha+\beta+1/2)_2},\quad k=0,1,\ldots,n-1,
\end{aligned}
$$
while the elements in the coefficients $A_{n,2}$, $B_{n,2}$ and $C_{n,2}$ are given by
\begin{equation}\label{coeffpar}
\begin{split}
a_{n,k}^{(1)}&=0,\quad k=1,\ldots,n,\quad a_{n,k}^{(2)}=0,\quad  k=0,1,\ldots,n,\\
a_{n,k}^{(3)}&= \frac{(k+2\beta+1)(n+\alpha+\beta+3/2)}{(2k+2\beta+1)\, (2n-k+\alpha+\beta+3/2)},\quad k=0,1,\ldots,n,
\\
b_{n,k}^{(1)}&=\frac{k(n-k+\alpha+1)}{(2k+2\beta+1)(2n-k+\alpha+\beta+3/2)},\quad k=1,\ldots,n,
\\
b_{n,k}^{(2)}&=0,\quad k=0,1,\ldots,n,
\\
b_{n,k}^{(3)}&=\frac{(k+2\beta+1)(n-k)}{ (2k+2\beta+1)\, (2n-k+\alpha+\beta+3/2)},\quad k=0,1,\ldots,n-1,
\\
c_{n,k}^{(1)}&=\frac{k(n+\beta+1/2)}{(2k+2\beta+1)(2n-k+\alpha+\beta+3/2)},\quad k=1,\ldots,n,
\\
c_{n,k}^{(2)}&=0,\quad k=0,1,\ldots,n-1,\quad c_{n,k}^{(3)}=0,\quad  k=0,1,\ldots,n-2.
\end{split}
\end{equation}
Similar results hold when we normalize the polynomials $Q_{n,k}$ at the point $(1,-1)$. The only change is to multiply $\sigma_{n,k}$ in \eqref{sigpar} by $(-1)^k$. Observe that the previous coefficients are not separable in the variables $n$ and $k$, unlike the case of product orthogonal polynomials.

It is possible to see that the Jacobi matrices $J_1$ and $J_2$ in \eqref{JacMat} are indeed both stochastic matrices. Therefore, we get discrete-time QBD processes (the first one being trivial). For instance, a diagram of the possible transitions of the QBD process generated by $J_2$ is given in Figure 2.
\begin{figure}
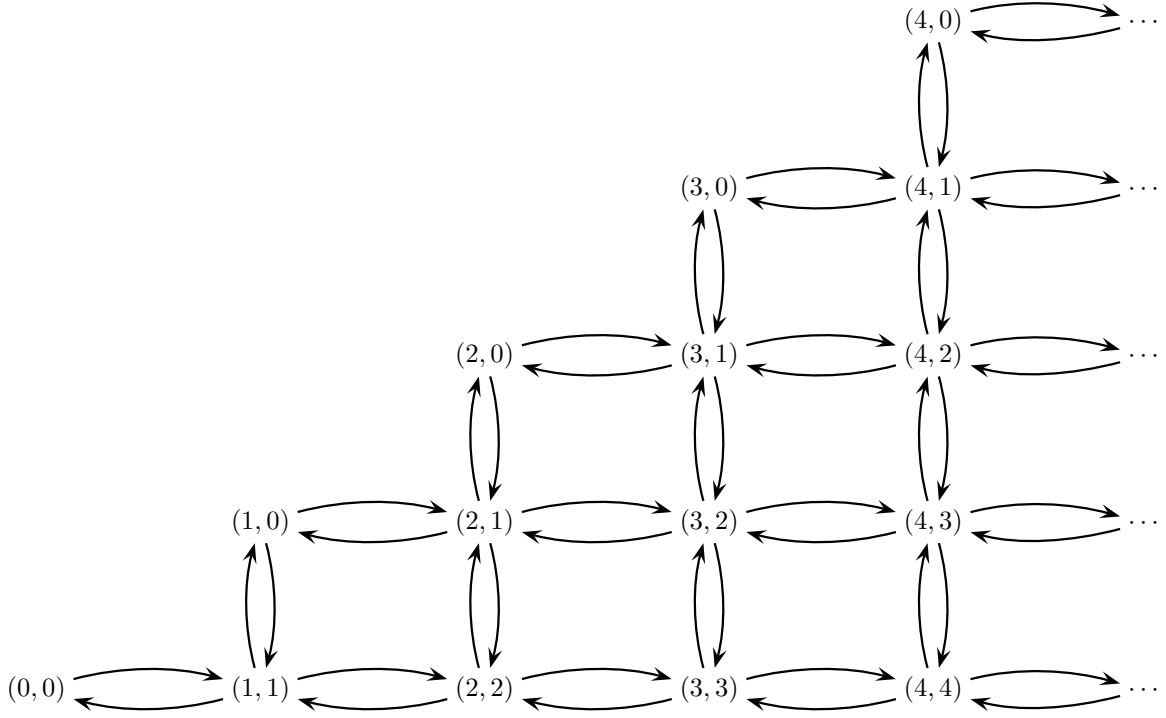

\begin{center}
$$\begin{psmatrix}[rowsep=1.8cm,colsep=2.2cm]
  &  && & \rnode{10}{(4,0)}& \rnode{16}{\Huge{\cdots}} \\
    &  & &\rnode{6}{(3,0)}& \rnode{11}{(4,1)}& \rnode{17}{\Huge{\cdots}} \\
     & & \rnode{3}{(2,0)} &\rnode{7}{(3,1)}& \rnode{12}{(4,2)}& \rnode{18}{\Huge{\cdots}} \\
    &\rnode{1}{(1,0)} &\rnode{4}{(2,1)} &\rnode{8}{(3,2)} & \rnode{13}{(4,3)}& \rnode{19}{\Huge{\cdots}} \\
  \rnode{0}{(0,0)}&\rnode{2}{(1,1)}&\rnode{5}{(2,2)}&\rnode{9}{(3,3)} & \rnode{14}{(4,4)}& \rnode{20}{\Huge{\cdots}}
\psset{nodesep=3pt,arcangle=15,labelsep=2ex,linewidth=0.3mm,arrows=->,arrowsize=1mm
3}
\ncarc{2}{0}\ncarc{0}{2} \ncarc{1}{2}\ncarc{2}{1}
\ncarc{1}{4}\ncarc{4}{1}\ncarc{3}{4}\ncarc{4}{3}
\ncarc{4}{5}\ncarc{5}{4}\ncarc{4}{8}\ncarc{8}{4}\ncarc{3}{7}\ncarc{7}{3}\ncarc{6}{7}\ncarc{7}{6}
\ncarc{7}{8}\ncarc{8}{7}\ncarc{9}{8}\ncarc{8}{9}\ncarc{6}{11}\ncarc{11}{6}\ncarc{7}{12}\ncarc{12}{7}\ncarc{8}{13}\ncarc{13}{8}\ncarc{10}{11}\ncarc{11}{10}\ncarc{12}{11}\ncarc{11}{12}\ncarc{13}{12}\ncarc{12}{13}\ncarc{13}{14}\ncarc{14}{13}\ncarc{10}{16}\ncarc{16}{10}\ncarc{11}{17}\ncarc{17}{11}\ncarc{12}{18}\ncarc{18}{12}\ncarc{13}{19}\ncarc{19}{13}
\ncarc{2}{5}\ncarc{5}{2}\ncarc{9}{5}\ncarc{5}{9}\ncarc{9}{14}\ncarc{14}{9}\ncarc{20}{14}\ncarc{14}{20}
\end{psmatrix}
$$
\end{center}
\caption{Diagram of all possible transitions of the discrete-time QBD process corresponding with $J_2$ for the orthogonal polynomials on a parabolic domain.}
\end{figure}

Now consider a Jacobi matrix of the form \eqref{Palpha}, i.e.
\begin{equation*}
\bm P=\tau_1 J_1+\tau_2 J_2.
\end{equation*}
Since $J_1$ and $J_2$ are both stochastic matrices, the Jacobi matrix $\bm P$ is a stochastic matrix if and only if $\tau_2=1-\tau_1$ and $0\leq \tau_1\leq1$. For simplicity, we will call $\tau=\tau_1$. Therefore
\begin{equation}\label{QBDpd}
\bm P=\tau J_1+(1-\tau) J_2,\quad0\leq \tau\leq1,
\end{equation}
is always a family of discrete-time QBD processes. Thus, the Karlin-McGregor representation formula \eqref{KMcF} for the $(i,j)$ block entry of the matrix $\bm P$ is given by
\begin{equation*}
\bm P_{i,j}^n=C\left(\int_{R}[\tau x+(1-\tau)y]^n\mathbb{Q}_i(x,y)\mathbb{Q}_j^T(x,y)(1-x)^\alpha(x-y^2)^\beta dxdy\right)\Pi_j,
\end{equation*}
where
$$
C=\frac{\Gamma(\alpha+\beta+\frac{5}{2})}{\sqrt{\pi}\, \Gamma(\alpha+1)\Gamma(\beta+1)},
$$
and $\Pi_j$ is a diagonal matrix whose entries can be computed using \eqref{formulaca} (for $\Pi_0=1$). Indeed, we have, for $k=0,1,\ldots,j,$
\begin{equation*}
\Pi_{j,k}=\frac{\sqrt{\pi}(2k+2\beta+1)(2j-k+\alpha+\beta+3/2)\Gamma(j+\alpha+\beta+3/2)\Gamma(k+2\beta+1)\Gamma(j-k+\alpha+1)}{2^{2\beta+1}\Gamma(j+\beta+3/2)\Gamma(\alpha+\beta+5/2)\Gamma(\alpha+1)\Gamma(\beta+1)(j-k)!k!}.
\end{equation*}
From \eqref{KMcFd} and \eqref{qsjacp} we can derive a separated expression for all probabilities, given by
\begin{align*}
\left(\bm P_{i,j}^n\right)_{i',j'}=&\frac{C\times\Pi_{j,j'}}{\sigma_{i,i'}\sigma_{j,j'}}\sum_{k=0}^n\binom{n}{k}\tau^k(1-\tau)^{n-k}\\
&\times\left(\int_R x^{k+i'/2+j'/2}y^{n-k}P_{i-i'}^{(\alpha,\beta+i'+1)}(2x-1)P_{j-j'}^{(\alpha,\beta+j'+1)}(2x-1)\right.\\
&\hspace{2cm}\times \left.P_{i'}^{(\beta,\beta)}\left(\frac{x}{\sqrt{y}}\right)P_{j'}^{(\beta,\beta)}\left(\frac{x}{\sqrt{y}}\right)(1-x)^\alpha(x-y^2)^\beta dxdy\right).
\end{align*}

According to Theorem \ref{Teo} we can construct an invariant measure $\bm\pi$ for the QBD process given by \eqref{ID}. Finally, the family of discrete-time QBD processes is recurrent (see \eqref{recu}) if and only if
$$
\int_R\frac{(1-x)^\alpha(x-y^2)^\beta}{1-\tau x-(1-\tau)y}dxdy=\infty.
$$
After some computations, it turns out that, if $0\leq \tau<1$, this integral is divergent if and only if $\alpha+\beta\leq-1$. If $\tau=1$ the divergence is equivalent to $\alpha\leq0$. Otherwise the QBD process is transient. Again, the QBD process can never be positive recurrent since the spectral measure is absolutely continuous and does not have any jumps.

\subsection{An urn model for the orthogonal polynomials on a parabolic domain}

In this section we will give a probabilistic interpretation of one of the QBD models introduced in the previous subsection. For simplicity, we will study the case of the discrete-time QBD process \eqref{QBDpd} with $\tau=0$, so that $\bm P=J_2$ (see Figure 2). Assume that $\alpha,\beta$ are nonnegative integers. Consider $\{Z_t : t=0,1,\ldots\}$ the discrete-time QBD process on the state space $\{(n,k) : 0\leq k\leq n, n\in\mathbb{N}_0\}$ whose one-step transition probability matrix is given by $\bm P=J_2$ (see  \eqref{ABC2} and \eqref{coeffpar}). We have two urns A and B and, at every time step $t=0,1,2,\ldots$, the state $(n,k)$ will represent the number of $n$ blue balls in urn A and the number of $k$ blue balls in urn B. Now, in urn B we add/remove red balls until we have $k+2\beta+1$ and draw one ball from the urn at random with the uniform distribution. We have two possibilities:
\begin{enumerate}
\item If we get a blue ball then we add/remove balls in urn A until we have $2n-2k+2\alpha+2$ blue balls and $2n+2\beta+1$ red balls. Then we draw again one ball from urn A and we have two possibilities:
\begin{itemize}
\item If we get a blue ball then we leave urn A with $n$ blue balls and urn B with $k-1$ blue balls and start over. Therefore, joining both steps, we have
$$
\mathbb{P}\left[Z_1=(n,k-1)\; |\; Z_0=(n,k)\right]=\frac{k}{2\beta+2k+1}\frac{2(n-k+\alpha+1)}{4n-2k+2\alpha+2\beta+3}.
$$
Observe that this probability is given by $b_{n,k}^{(1)}$ in \eqref{coeffpar}.
\item If we get a red ball then we leave urn A with $n-1$ blue balls and urn B with $k-1$ blue balls and start over. Therefore, joining both steps, we have
$$
\mathbb{P}\left[Z_1=(n-1,k-1)\; |\; Z_0=(n,k)\right]=\frac{k}{2\beta+2k+1}\frac{2n+2\beta+1}{4n-2k+2\alpha+2\beta+3}.
$$
Observe that this probability is given by $c_{n,k}^{(1)}$ in \eqref{coeffpar}.
\end{itemize}
\item If we get a red ball then we add/remove balls in urn A until we have $2n+2\alpha+2\beta+3$ blue balls and $2n-2k$ red balls. Then we draw again one ball from urn A and we have two possibilities:
\begin{itemize}
\item If we get a blue ball then we leave urn A with $n+1$ blue balls and urn B with $k+1$ blue balls and start over. Therefore, joining both steps, we have
$$
\mathbb{P}\left[Z_1=(n+1,k+1)\; |\; Z_0=(n,k)\right]=\frac{k+2\beta+1}{2\beta+2k+1}\frac{2n+2\alpha+2\beta+3}{4n-2k+2\alpha+2\beta+3}.
$$
Observe that this probability is given by $a_{n,k}^{(3)}$ in \eqref{coeffpar}.
\item If we get a red ball then we leave urn A with $n$ blue balls and urn B with $k+1$ blue balls and start over. Therefore, joining both steps, we have
$$
\mathbb{P}\left[Z_1=(n,k+1)\; |\; Z_0=(n,k)\right]=\frac{k+2\beta+1}{2\beta+2k+1}\frac{2n-2k}{4n-2k+2\alpha+2\beta+3}.
$$
Observe that this probability is given by $b_{n,k}^{(3)}$ in \eqref{coeffpar}.
\end{itemize}
\end{enumerate}
Therefore from a state $(n,k)$ there are four possible transitions between the states, except when the number of blue balls in urn B is zero, i.e. the state $(n,0)$, in which case we only have two transitions, or when the initial state is $(0,0)$ where the only transition is to $(1,0)$ with probability 1 (see Figure 2). Since we are assuming that $\alpha$ and $\beta$ are nonnegative integers, this urn model will always be a transient process.

\section{QBD processes associated with orthogonal polynomials on the triangle}\label{sectri}

Orthogonal polynomials on the triangle were first introduced by Proriol in \cite{Pr57} and after that they have been studied by several authors. The classical inner product on the triangle
$$
\mathbb{T}^2 :=\{(x,y)\in \mathbb{R}^2: x+y \leq 1, \, x\geq 0, \,y\geq 0\}
$$
is given by
$$
   \langle f,g \rangle = \frac{\Gamma(\alpha+\beta+\gamma+3)}{\Gamma(\alpha+1) \Gamma(\beta+1)\Gamma(\gamma+1)} \int_{{\mathbb{T}^2}}\, f(x,y)\, g(x,y) \, x^{\alpha} y^{\beta} (1-x-y)^{\gamma} \, dx,  \quad \alpha, \beta, \gamma >- 1.
$$
Some results about this inner product and different bases of orthogonal polynomials with respect to it can be found in \cite[pp. 35]{DX14}. For $0\leq k \leq n$, we can define
\begin{equation}\label{poltri}
P_{n,k}(x,y)= P_{n-k}^{(2k+\beta+\gamma+1,\alpha)}(2x-1) \, (1-x)^{k} \, P_k^{(\gamma,\beta)}\left(\frac{2y}{1-x}-1 \right),
\end{equation}
where $P_n^{(\alpha,\beta)}(t)$ is the standard Jacobi polynomial orthogonal with respect to the weight function \eqref{pesoJ}. Then $\{P_{n,k}\, : \, 0\leq k \leq n \}$ is a basis of the space of orthogonal polynomials of degree $n$, and the square norm of the polynomial $P_{n,k}$, denoted by $\nu_{n,k}$, is given by
\begin{equation*}
\nu_{n,k}= \frac{(n+k+\alpha +\beta +\gamma +2)(k+\beta +\gamma +1)(\alpha +1)_{n-k}\,(\beta+1)_k\, (\gamma +1)_k\, (\beta +\gamma +2)_{n+k}}{(n-k)! \, k!\,(2n+\alpha +\beta +\gamma +2)(2k+\beta +\gamma +1)(\beta +\gamma +2)_k \, (\alpha +\beta +\gamma +3)_{n+k}}.
\end{equation*}
These polynomials satisfy the three term recurrence relations \eqref{TTRR} and the matrix coefficients of these relations are of the same form as in \eqref{ABC1} and \eqref{ABC2}. Now we will normalize the polynomials in such a way that all of them equal 1 at one of the boundary points of the support of the measure. The boundary in this case is formed by all points in the border of the triangle, but it turns out that not all of these boundary points lead to a probabilistic model. We have found that normalizing at the vertices $(0,1)$ and $(0,0)$ gives coefficients of the three term recurrence relation with probabilistic interpretations. The problem with the vertex $(1,0)$ is that $P_{n,k}(1,0)=0$ for $k=1,\ldots,n$ (see \eqref{poltri}) so it is not possible to normalize the way we are looking for.

\subsection{Normalization at the point $(0,1)$}\label{secnorm1}
Using \eqref{jac-norm}, let us denote
$$
\sigma_{n,k}=P_{n,k}(0,1)= P_{n-k}^{(2k+\beta+\gamma+1,\alpha)}(-1) \, P_k^{(\gamma,\beta)}(1)= (-1)^{n-k}  \frac{(\alpha+1)_{n-k}}{(n-k)!} \frac{(\gamma+1)_k}{k!},
$$
and let us define the polynomials $Q_{n,k}$ by $ Q_{n,k}(x,y)=\sigma_{n,k}^{-1} P_{n,k}(x,y).$
This new basis of orthogonal polynomials $\{Q_{n,k},0\leq k\leq n\}$ satisfies $Q_{n,k}(0,1)=1$ for all $n\geq0$ and $0\leq k\leq n$. The inverse of the square norms $\Pi_n$ in \eqref{norms} are diagonal matrices with diagonal entries given by
\begin{equation}\label{Pis}
\Pi_{n,k}=\frac{\sigma_{n,k}^2}{\nu_{n,k}},\quad k=0,1,\ldots,n.
\end{equation}
The vector of polynomials
${\mathbb Q}_n=\left( Q_{n,0},  Q_{n,1},  \dots ,  Q_{n,n} \right)^{T}$ satisfy the three-term recurrence relations
\begin{equation}\label{TTRRq}
\begin{aligned}
-x\, {\mathbb Q}_n(x,y) & = A_{n,1} {\mathbb Q}_{n+1}(x,y)+ B_{n,1} {\mathbb Q}_n(x,y) + C_{n,1} {\mathbb Q}_{n-1}(x,y), \\
y\, {\mathbb Q}_n(x,y) & = A_{n,2} {\mathbb Q}_{n+1}(x,y)+ B_{n,2} {\mathbb Q}_n(x,y) + C_{n,2} {\mathbb Q}_{n-1}(x,y),
\end{aligned}
\end{equation}
where the elements in the coefficients $A_{n,1}, B_{n,1}, C_{n,1}$ (see \eqref{ABC1}) are given by
\begin{equation}\label{coeffs1}
\begin{aligned}
a_{n,k} & = \frac{(n-k+\alpha+1)(n+k+\alpha+\beta+\gamma+2)}{ (2n+\alpha+\beta+\gamma+2)_2},\quad k=0,1,\ldots,n, \\
b_{n,k} & = -(a_{n,k}+c_{n,k}),\quad k=0,1,\ldots,n,\\
c_{n,k} & =  \frac{ (n-k) (n+k+\beta+\gamma+1)}{(2n+\alpha+\beta+\gamma+1)_2},\quad k=0,1,\ldots,n-1,
\end{aligned}
\end{equation}
the elements in coefficient $A_{n,2}$ (see \eqref{ABC2}) are given by
\begin{equation}\label{coeffs2}
\begin{aligned}
a_{n,k}^{(1)} & = \frac{(n-k+\alpha+1)_2 \, k (k+\beta)}{(2n+\alpha+\beta+\gamma+2)_2 \, (2k+\beta+\gamma)_2},\quad k=1,\ldots,n, \\[5pt]
a_{n,k}^{(2)} & =\left(1+\frac{\beta^2-\gamma^2}{(2k+\beta+\gamma+2)(2k+\beta+\gamma)} \right) \frac{a_{n,k}}{2},\quad k=0,1,\ldots,n, \\[5pt]
a_{n,k}^{(3)} & = \frac{(n+k+\alpha+\beta+\gamma+2)_2\, (k+\gamma+1)(k+\beta+\gamma+1)}{(2n+\alpha+\beta+\gamma+2)_2\,  (2k+\beta+\gamma+1)_2},\quad k=0,1,\ldots,n,
\end{aligned}
\end{equation}
the ones in coefficient $B_{n,2}$ are
\begin{equation}\label{coeffs3}
\begin{aligned}
b_{n,k}^{(1)} & = \frac{2k (k+\beta)(n-k+\alpha+1)(n+k+\beta+\gamma+1)}{(2k+\beta+\gamma)_2\, (2n+\alpha+\beta+\gamma+1) (2n+\alpha+\beta+\gamma+3)},\quad k=1,\ldots,n,\\[5pt]
b_{n,k}^{(2)} & = \left(1+\frac{\beta^2-\gamma^2}{(2k+\beta+\gamma)(2k+\beta+\gamma+2)}\right) \frac{1+b_{n,k}}{2},\quad k=0,1,\ldots,n,\\[5pt]
b_{n,k}^{(3)} & = \frac{2(n-k)(n+k+\alpha+\beta+\gamma+2)(k+\gamma+1)(k+\beta+\gamma+1)}{ (2n+\alpha+\beta+\gamma+1) (2n+\alpha+\beta+\gamma+3) (2k+\beta+\gamma+1)_2},\quad k=0,1,\ldots,n-1,
\end{aligned}
\end{equation}
and in $C_{n,2}$
\begin{equation}\label{coeffs4}
\begin{aligned}
c_{n,k}^{(1)} & = \frac{(n+k+\beta+\gamma)_2\, (k+\beta) k}{(2n+\alpha+\beta+\gamma+1)_2\, (2k+\beta+\gamma)_2},\quad k=1,\ldots,n,\\[5pt]
c_{n,k}^{(2)} & = \left(1+\frac{\beta^2-\gamma^2}{(2k+\beta+\gamma+2)(2k+\beta+\gamma)} \right)\frac{c_{n,k}}{2},\quad k=0,1,\ldots,n-1,\\[5pt]
c_{n,k}^{(3)} & = \frac{(n-k-1)_2\,(k+\beta+\gamma+1)(k+\gamma+1)}{(2n+\alpha+\beta+\gamma+1)_2 (2k+\beta+\gamma+1)_2},\quad k=0,1,\ldots,n-2.
\end{aligned}
\end{equation}
It is possible to see that all entries of $A_{n,2},B_{n,2},C_{n,2}$ are nonnegative numbers. Evaluating the equations \eqref{TTRRq} at the point $(0,1)$ we get that the Jacobi matrix $J_{2}$ in \eqref{JacMat} is a \emph{stochastic matrix}. Therefore we get a nontrivial and non homogeneous discrete-time QBD process. In Figure 3 we can see a diagram of the possible transitions of this discrete-time QBD process.


\begin{figure}
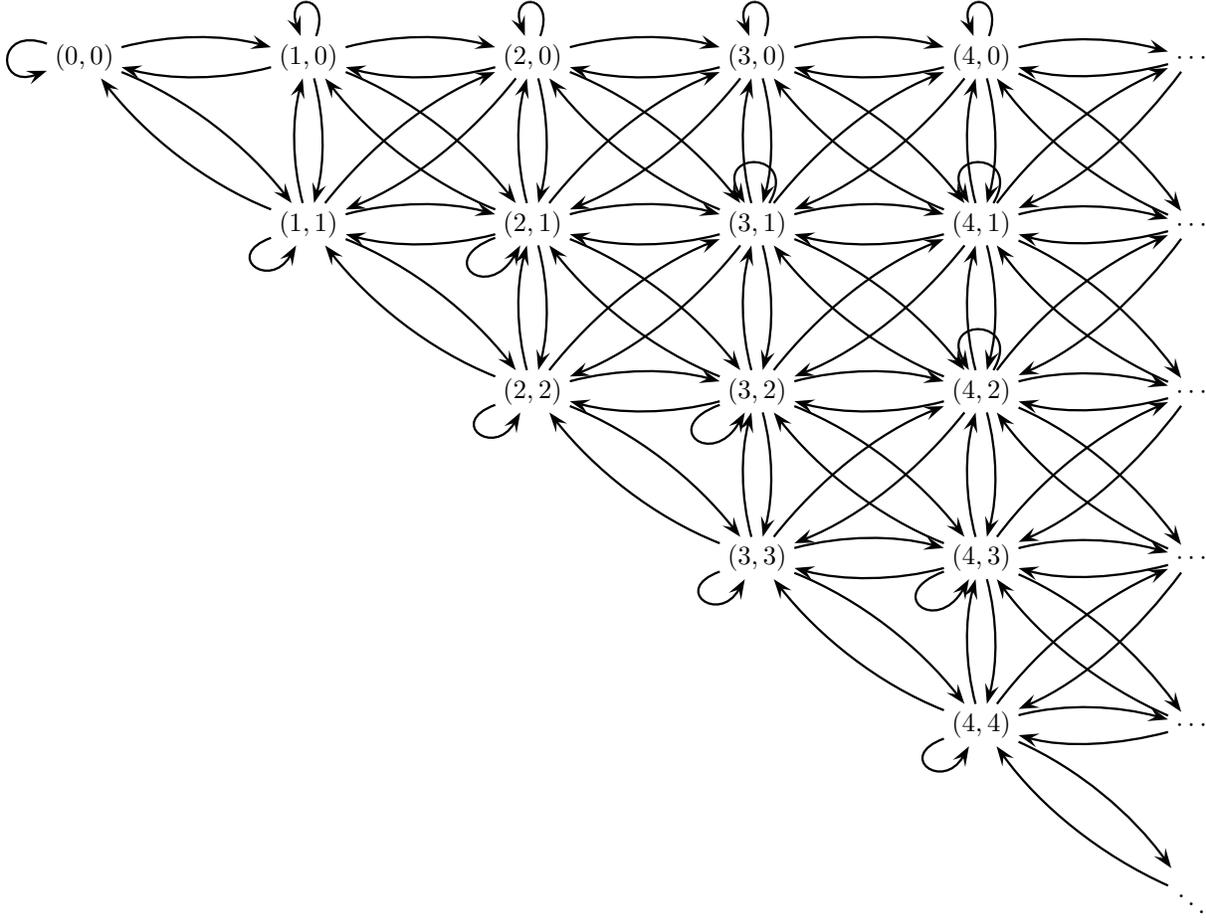

\begin{center}
$$\begin{psmatrix}[rowsep=1.8cm,colsep=2.2cm]
  \rnode{0}{(0,0)}& \rnode{1}{(1,0)} & \rnode{3}{(2,0)}& \rnode{6}{(3,0)}& \rnode{10}{(4,0)}& \rnode{15}{\Huge{\cdots}} \\
    & \rnode{2}{(1,1)} & \rnode{4}{(2,1)}& \rnode{7}{(3,1)}& \rnode{11}{(4,1)}& \rnode{16}{\Huge{\cdots}} \\
     & & \rnode{5}{(2,2)}& \rnode{8}{(3,2)}& \rnode{12}{(4,2)}& \rnode{17}{\Huge{\cdots}} \\
    & & & \rnode{9}{(3,3)}&\rnode{13}{(4,3)}& \rnode{18}{\Huge{\cdots}} \\
     & & & & \rnode{14}{(4,4)}& \rnode{19}{\Huge{\cdots}} \\
     & & & & & \rnode{20}{\ddots}
\psset{nodesep=3pt,arcangle=15,labelsep=2ex,linewidth=0.3mm,arrows=->,arrowsize=1mm
3} \nccurve[angleA=160,angleB=200,ncurv=4]{0}{0}
\nccurve[angleA=70,angleB=110,ncurv=6]{1}{1}
\nccurve[angleA=70,angleB=110,ncurv=6]{3}{3}
\nccurve[angleA=70,angleB=110,ncurv=6]{6}{6}
\nccurve[angleA=70,angleB=110,ncurv=6]{10}{10}
\nccurve[angleA=200,angleB=240,ncurv=4]{2}{2}
\nccurve[angleA=200,angleB=240,ncurv=4]{5}{5}
\nccurve[angleA=200,angleB=240,ncurv=4]{9}{9}
\nccurve[angleA=200,angleB=240,ncurv=4]{14}{14}
\nccurve[angleA=200,angleB=240,ncurv=5]{4}{4}
\nccurve[angleA=200,angleB=240,ncurv=5]{8}{8}
\nccurve[angleA=200,angleB=240,ncurv=5]{13}{13}
\nccurve[angleA=60,angleB=120,ncurv=5]{7}{7}
\nccurve[angleA=60,angleB=120,ncurv=5]{11}{11}
\nccurve[angleA=60,angleB=120,ncurv=5]{12}{12}
 \ncarc{0}{1}\ncarc{1}{0} \ncarc{0}{2}\ncarc{2}{0} \ncarc{1}{2} \ncarc{2}{1}\ncarc{2}{3} \ncarc{3}{2} \ncarc{4}{1}\ncarc{1}{4}
 \ncarc{2}{5}\ncarc{5}{2}\ncarc{5}{9}\ncarc{9}{5}\ncarc{9}{14}\ncarc{14}{9}\ncarc{14}{20}\ncarc{20}{14}
\ncarc{3}{4} \ncarc{4}{3} \ncarc{4}{5}\ncarc{5}{4} \ncarc{7}{6} \ncarc{6}{7}
\ncarc{3}{1}\ncarc{1}{3}\ncarc{3}{6}\ncarc{6}{3}\ncarc{6}{10}\ncarc{10}{6}\ncarc{10}{15}\ncarc{15}{10}\ncarc{2}{4}\ncarc{4}{2}\ncarc{4}{7}\ncarc{7}{4}\ncarc{7}{11}\ncarc{11}{7}\ncarc{16}{11}\ncarc{11}{16}\ncarc{5}{8}\ncarc{8}{5}\ncarc{8}{12}\ncarc{12}{8}\ncarc{17}{12}\ncarc{12}{17}\ncarc{9}{13}\ncarc{13}{9}\ncarc{13}{18}\ncarc{18}{13}\ncarc{14}{19}\ncarc{19}{14}
\ncarc{7}{8}\ncarc{8}{7}\ncarc{9}{8}\ncarc{8}{9}\ncarc{10}{11}\ncarc{11}{10}\ncarc{11}{12}\ncarc{12}{11}\ncarc{13}{12}\ncarc{12}{13}\ncarc{13}{14}\ncarc{14}{13}
\ncarc{3}{7}\ncarc{7}{3}\ncarc{6}{4}\ncarc{4}{6}\ncarc{11}{6}\ncarc{6}{11}\ncarc{10}{7}\ncarc{7}{10}\ncarc{4}{8}\ncarc{8}{4}\ncarc{5}{7}\ncarc{7}{5}\ncarc{7}{12}\ncarc{12}{7}\ncarc{11}{8}\ncarc{8}{11}\ncarc{8}{13}\ncarc{13}{8}\ncarc{9}{12}\ncarc{12}{9}\ncarc{10}{16}\ncarc{16}{10}\ncarc{11}{15}\ncarc{15}{11}\ncarc{11}{17}\ncarc{17}{11}\ncarc{12}{16}\ncarc{16}{12}\ncarc{12}{18}\ncarc{18}{12}\ncarc{13}{17}\ncarc{17}{13}\ncarc{13}{19}\ncarc{19}{13}\ncarc{14}{18}\ncarc{18}{14}
\end{psmatrix}
$$
\end{center}
\caption{Diagram of all possible transitions of the discrete-time QBD process corresponding with $J_2$ for the orthogonal polynomials on the triangle.}
\end{figure}

The Jacobi matrix $J_{1}$ also have a probabilistic interpretation. Indeed, observe that the coefficients $a_{n,k}, c_{n,k}$ are always nonnegative (and bounded by 1) and $a_{n,k}+b_{n,k}+c_{n,k}=0$. That means that $J_{1}$ is the infinitesimal operator of a continuous-time QBD process. Since all coefficients are diagonal that means that transitions between phases are not possible. Therefore, for each phase $k$, the QBD process is a regular continuous-time birth-death process.

Now consider a Jacobi matrix of the form \eqref{Palpha}, i.e.
\begin{equation}\label{QBDsi}
\bm P=\tau_1 J_1+\tau_2 J_2.
\end{equation}
We want to give $\bm P$ a probabilistic interpretation. For that there are at least two possibilities, either a continuous or a discrete-time QBD process. If we want to have a continuous-time QBD process then we need $\bm P\bm e=\bm 0$ and nonnegative off-diagonal entries. But this is possible if and only if $\tau_2=0$ and $\tau_1>0$, i.e. a scalar multiple of $J_1$, which has all diagonal coefficients and the QBD process is trivial.

If we want to have a discrete-time QBD process then we need $\bm P\bm e=\bm e$ and nonnegative (scalar) entries. This is possible if and only if $\tau_2=1$ and the parameter $\tau_1$ is chosen in such a way that all entries of $\bm P$ are nonnegative. For simplicity, we will call $\tau=\tau_1$. Bearing in mind the shape of the coefficients $A_{n,i}, B_{n,i}, C_{n,i}, n\geq0, i=1,2,$ in \eqref{ABC1} and \eqref{ABC2} and looking at their entries in \eqref{coeffs1}--\eqref{coeffs4}, the entries of $\bm P=\tau J_1+J_2$ are nonnegative if and only if
\begin{equation*}
\begin{split}
\tau a_{n,k}+a_{n,k}^{(2)}&\geq0,\\
\tau(a_{n,k}+c_{n,k})&\leq b_{n,k}^{(2)},\\
\tau c_{n,k}+c_{n,k}^{(2)}&\geq0,
\end{split}\quad\mbox{for all}\quad n\geq0,\quad k=0,1,\ldots,n.
\end{equation*}
In other words,
\begin{equation*}
\begin{split}
\tau &\geq -D_k^{\beta,\gamma},\\
\tau&\leq D_k^{\beta,\gamma}\left(-1+\frac{1}{a_{n,k}+c_{n,k}}\right),
\end{split}\quad\mbox{for all}\quad n\geq0,\quad k=0,1,\ldots,n,
\end{equation*}
where
\begin{equation}\label{ddk}
D_k^{\beta,\gamma}=\frac{1}{2}\left(1+\frac{\beta^2-\gamma^2}{(2k+\beta+\gamma+2)(2k+\beta+\gamma)} \right).
\end{equation}
From \eqref{coeffs1}, we observe that 
$$
a_{n,k}+c_{n,k}=\frac{1}{2}\left(1+\frac{\alpha^2-(2k+\beta+\gamma+1)^2} {(2n+\alpha+\beta+\gamma+1)(2n+\alpha+\beta+\gamma+3)}\right).
$$
On one hand, we have 
$$
\min_{0\leq k\leq n}\left\{D_k^{\beta,\gamma}\right\}=\begin{cases}
1/2,&\mbox{if}\quad  \beta^2\geq\gamma^2,\\
\\
\displaystyle\frac{\beta+1}{\beta+\gamma+2},&\mbox{if} \quad \beta^2<\gamma^2.
\end{cases}
$$
On the other hand, it is possible to see that
$$
\min_{n\in\mathbb{N}_0,0\leq k\leq n}\left\{-1+\frac{1}{a_{n,k}+c_{n,k}}\right\}=\begin{cases}
K_{\alpha,\beta,\gamma},&\mbox{if}\quad  \alpha<-(\beta+\gamma+1),\\
\\
1,&\mbox{if}\quad  \alpha^2\leq(\beta+\gamma+1)^2,\\
\\
\displaystyle\frac{2+\beta+\gamma}{\alpha+1},&\mbox{if} \quad \alpha>\beta+\gamma+1,
\end{cases}
$$
where
$$
K_{\alpha,\beta,\gamma}=1-\frac{\alpha^2-(\beta+\gamma+1)^2}{4+(\alpha+3)(\alpha+\beta+\gamma+1)}.
$$
Combining these two relations we have that the entries of $\bm P=\tau J_1+J_2$ are nonnegative (and therefore $\bm P$ is a stochastic matrix) if and only if the upper bound of $\tau$ is given by
\begin{equation}\label{condaa21}
\tau\leq
\begin{cases}
K_{\alpha,\beta,\gamma}/2,&\mbox{if}\quad  \beta^2\geq\gamma^2\;\;\mbox{and}\;\; \alpha<-(\beta+\gamma+1),\\
\\1/2,&\mbox{if}\quad  \beta^2\geq\gamma^2\;\;\mbox{and}\;\; \alpha^2\leq(\beta+\gamma+1)^2,\\
\\
\displaystyle\frac{\beta+\gamma+2}{2(\alpha+1)},&\mbox{if} \quad\beta^2\geq\gamma^2\;\;\mbox{and}\;\; \alpha>\beta+\gamma+1,\\
\\
\displaystyle\frac{(\beta+1)K_{\alpha,\beta,\gamma}}{\beta+\gamma+2},&\mbox{if} \quad \beta^2<\gamma^2\;\;\mbox{and}\;\;\alpha<-(\beta+\gamma+1),\\
\\
\displaystyle\frac{\beta+1}{\beta+\gamma+2},&\mbox{if} \quad \beta^2<\gamma^2\;\;\mbox{and}\;\;\alpha^2\leq(\beta+\gamma+1)^2,\\
\\
\displaystyle\frac{\beta+1}{\alpha+1},&\mbox{if} \quad \beta^2<\gamma^2\;\;\mbox{and}\;\; \alpha>\beta+\gamma+1,
\end{cases}
\end{equation}
while the lower bound of $\tau$ is given by
\begin{equation}\label{condaa2}
\tau\geq
\begin{cases}
-1/2,&\mbox{if}\quad  \beta^2\geq\gamma^2,\\
\\
-\displaystyle\frac{\beta+1}{\beta+\gamma+2},&\mbox{if} \quad \beta^2<\gamma^2.
\end{cases}
\end{equation}
Therefore, for all values of $\tau$ in the range \eqref{condaa21} and \eqref{condaa2}, we have a \emph{family} of discrete-time QBD process with transition probability matrix $\bm P=\tau J_1+J_2$. Thus the Karlin-McGregor representation formula \eqref{KMcF} for the $(i,j)$ block entry of the matrix $\bm P$ is given by
\begin{equation*}
\bm P_{i,j}^n=\frac{\Gamma(\alpha+\beta+\gamma+3)}{\Gamma(\alpha+1) \Gamma(\beta+1)\Gamma(\gamma+1)}\left(\int_\mathbb{T}(y-\tau x)^n\mathbb{Q}_i(x,y)\mathbb{Q}_j^T(x,y)x^{\alpha} y^{\beta} (1-x-y)^{\gamma}dxdy\right)\Pi_j,
\end{equation*}
where $\Pi_j$ is a diagonal matrix with entries given by \eqref{Pis}. From \eqref{KMcFd} and \eqref{poltri} we can derive a separated expression for all probabilities, given by
\begin{align*}
\left(\bm P_{i,j}^n\right)_{i',j'}=&\frac{\Gamma(\alpha+\beta+\gamma+3)}{\Gamma(\alpha+1) \Gamma(\beta+1)\Gamma(\gamma+1)}\frac{\Pi_{j,j'}}{\sigma_{i,i'}\sigma_{j,j'}}\sum_{k=0}^n\binom{n}{k}(-1)^k\tau^k\\
&\times\left(\int_\mathbb{T} x^{\alpha+k}y^{\beta+n-k}P_{i-i'}^{(2i'+\beta+\gamma+1,\alpha)}(2x-1)P_{j-j'}^{(2j'+\beta+\gamma+1,\alpha)}(2x-1)\right.\\
&\hspace{1cm}\times \left.P_{i'}^{(\gamma,\beta)}\left(\frac{2y}{\sqrt{1-x}}-1\right)P_{j'}^{(\gamma,\beta)}\left(\frac{2y}{\sqrt{1-x}}-1\right)(1-x)^{i'+j'}(1-x-y)^\gamma dxdy\right).
\end{align*}
According to Theorem \ref{Teo} we can construct an invariant measure $\bm\pi$ for the QBD process given by \eqref{ID}. Finally, the family of discrete-time QBD processes is recurrent (see \eqref{recu}) if and only if
$$
\int_\mathbb{T}\frac{x^{\alpha} y^{\beta} (1-x-y)^{\gamma}}{1-y+\tau x}dxdy=\infty.
$$
After some computations it turns out that, in the range of the values of $\tau$ in \eqref{condaa21} and \eqref{condaa2}, this integral is divergent if and only if $\alpha+\gamma\leq-1$. Otherwise the QBD process is transient. The QBD process can never be positive recurrent since the spectral matrix is absolutely continuous and does not have any jumps.

\subsection{Normalization at the point $(0,0)$}\label{secnorm0}

In this case, using again \eqref{jac-norm}, the coefficients $\sigma_{n,k}$ are given by
$$
\sigma_{n,k}=P_{n,k}(0,0)= P_{n-k}^{(2k+\beta+\gamma+1,\alpha)}(-1) \, P_k^{(\gamma,\beta)}(-1)= (-1)^{n}  \frac{(\alpha+1)_{n-k}}{(n-k)!} \frac{(\beta+1)_k}{k!}.
$$
Therefore, the polynomials $Q_{n,k}(x,y)=\sigma_{n,k}^{-1}P_{n,k}(x,y)$ satisfy $Q_{n,k}(0,0)=1$ for all $n\geq0$ and $0\leq k\leq n$. The inverse of the square norms can be computed as in \eqref{Pis}. The vector of polynomials
${\mathbb Q}_n=\left( Q_{n,0},  Q_{n,1},  \dots ,  Q_{n,n} \right)^{T}$ satisfies now the three-term recurrence relations
\begin{equation}\label{TTRRq2}
\begin{aligned}
-x\, {\mathbb Q}_n(x,y) & = A_{n,1} {\mathbb Q}_{n+1}(x,y)+ B_{n,1} {\mathbb Q}_n(x,y) + C_{n,1} {\mathbb Q}_{n-1}(x,y), \\
-y\, {\mathbb Q}_n(x,y) & = A_{n,2} {\mathbb Q}_{n+1}(x,y)+ B_{n,2} {\mathbb Q}_n(x,y) + C_{n,2} {\mathbb Q}_{n-1}(x,y).
\end{aligned}
\end{equation}
The coefficients of \eqref{ABC1} and \eqref{ABC2} are exactly the same as in the previous case, i.e. \eqref{coeffs1}--\eqref{coeffs4} interchanging $\beta$ by $\gamma$, except for the coefficients $a_{n,k}^{(2)}, b_{n,k}^{(2)}$ and $c_{n,k}^{(2)}$, where it appears a minus sign (but not interchanging $\beta$ by $\gamma$).

In this case we have, evaluating at $(0,0)$ in \eqref{TTRRq2}, that $J_1\bm e=J_2\bm e=\bm 0$. $J_1$ is the same matrix as before, so it represents a trivial continuous-time QBD process. Nevertheless, although $J_2\bm e=\bm 0$, $J_2$ \emph{does not generate} a continuous-time QBD process itself since $a_{n,k}^{(2)}\leq0$ and $c_{n,k}^{(2)}\leq0$.

Consider now the Jacobi matrix
$$
\bm{\mathcal{A}}=\tau_1J_1+\tau_2J_2.
$$
In order to have a continuous-time QBD process (now it can not be a discrete-time QBD process) we need that $\bm{\mathcal{A}}\bm e=\bm 0$ (which is always satisfied) and \emph{all} nonnegative off-diagonal entries. This holds if and only if $\tau_2\geq0$ and
\begin{equation*}
\begin{split}
\tau_1 a_{n,k}+\tau_2a_{n,k}^{(2)}&\geq0,\\
\tau_1 c_{n,k}+\tau_2c_{n,k}^{(2)}&\geq0,
\end{split}\quad\mbox{for all}\quad n\geq0,\quad k=0,1,\ldots,n.
\end{equation*}
This is equivalent to
$$
\tau_1\geq \tau_2\max_{0\leq k\leq n}\left\{D_k^{\beta,\gamma}\right\},
$$
where, $D_k^{\beta,\gamma}$ is defined by \eqref{ddk}. In other words
\begin{equation}\label{condaa22}
\frac{\tau_1}{\tau_2}\geq
\begin{cases}
1/2,&\mbox{if}\quad  \beta^2\leq\gamma^2,\\
\\
\displaystyle\frac{\beta+1}{\beta+\gamma+2},&\mbox{if} \quad \beta^2>\gamma^2.
\end{cases}
\end{equation}
If $\tau_2=0$ then we need $\tau_1\geq0$. A diagram of the possible transition for this continuous-time QBD process is the same in Figure 3, but without self transitions.

Therefore, for all values of $\tau_1$ and $\tau_2$ in the range \eqref{condaa22}, we have again a \emph{family} of continuous-time QBD process with infinitesimal operator matrix $\bm{\mathcal{A}}=\tau_1 J_1+\tau_2J_2$. Thus, the Karlin-McGregor representation formula \eqref{KMcF} for the $(i,j)$ block entry of the transition function matrix $\bm P(t)$ is given by
\begin{equation*}
\bm P_{i,j}(t)=\frac{\Gamma(\alpha+\beta+\gamma+3)}{\Gamma(\alpha+1) \Gamma(\beta+1)\Gamma(\gamma+1)}\left(\int_\mathbb{T}e^{-(\tau_1x+\tau_2y)t}\mathbb{Q}_i(x,y)\mathbb{Q}_j^T(x,y)x^{\alpha} y^{\beta} (1-x-y)^{\gamma}dxdy\right)\Pi_j,
\end{equation*}
where $\Pi_j$ is a diagonal matrix with entries given by \eqref{Pis}. From \eqref{KMcFd2} and \eqref{poltri} we can derive a separated expression for all probabilities, given by
\begin{align*}
\left(\bm P_{i,j}(t)\right)_{i',j'}=&\frac{\Gamma(\alpha+\beta+\gamma+3)}{\Gamma(\alpha+1) \Gamma(\beta+1)\Gamma(\gamma+1)}\frac{\Pi_{j,j'}}{\sigma_{i,i'}\sigma_{j,j'}}\\
&\times\left(\int_\mathbb{T} e^{-(\tau_1x+\tau_2y)t}x^{\alpha}y^{\beta}P_{i-i'}^{(2i'+\beta+\gamma+1,\alpha)}(2x-1)P_{j-j'}^{(2j'+\beta+\gamma+1,\alpha)}(2x-1)\right.\\
&\hspace{1cm}\times \left.P_{i'}^{(\gamma,\beta)}\left(\frac{2y}{\sqrt{1-x}}-1\right)P_{j'}^{(\gamma,\beta)}\left(\frac{2y}{\sqrt{1-x}}-1\right)(1-x)^{i'+j'}(1-x-y)^\gamma dxdy\right).
\end{align*}
According to Theorem \ref{Teo} we can construct an invariant measure $\bm\pi$ for the QBD process given by \eqref{ID}. Finally, the family of continuous-time QBD processes is recurrent (see \eqref{recu2}) if and only if
$$
\int_\mathbb{T}\frac{x^{\alpha} y^{\beta} (1-x-y)^{\gamma}}{\tau_1x+\tau_2y}dxdy=\infty.
$$
After some computations it turns out that, in the range of the values of $\tau_1$ in \eqref{condaa22}, this integral is divergent if and only if $\alpha+\beta\leq-1$. If $\tau_2=0$ and $\tau_1>0$ the divergence is equivalent to $\alpha\leq0$. Otherwise the QBD process is transient. The QBD process can never be positive recurrent since the spectral measure is absolutely continuous and does not have any jumps.

\subsection{An urn model for the orthogonal polynomials on the triangle}
In this section we will give a probabilistic interpretation of one of the QBD models introduced in Section \ref{secnorm1}. For simplicity, we will study the case of the discrete-time QBD process \eqref{QBDsi} with $\tau_1=0$ and $\tau_2=1$ (therefore $\bm P=J_2$).

As we can see from \eqref{coeffs2}--\eqref{coeffs4}, the probability coefficients are quite complicated and depend on three parameters $\alpha, \beta, \gamma$, apart from the level $n$ and phase $k$. However, we managed to find an urn model for this QBD process by decomposing it into two simpler urn models. For that, we will try to get a stochastic block LU factorization of the Jacobi matrix $J_2$. The spirit of this method is the same as the one used in \cite{GdI1,GdI2}. Write $J_2$ in \eqref{JacMat} as
\begin{equation}\label{JLU}
J_2=\left(\begin{array}{cccccc}
S_0   &    &         &       \bigcirc \\
R_1   &S_1   &                   \\
          &R_2   & S_2 &          \\
  \bigcirc         &           & \ddots  & \ddots
\end{array}\right)\left(\begin{array}{cccccc}
Y_0   & X_0  &         &       &\bigcirc \\
   &Y_1  & X_1 &       &          \\
          &  & Y_2 &X_2 &         \\
  \bigcirc         &           & & \ddots & \ddots
\end{array}\right)=J_LJ_U.
\end{equation}
A direct computation shows that
\begin{equation}\label{FLU}
\begin{split}
A_{n,2}&=S_{n}X_n,\quad n\geq0,\\
B_{n,2}&=R_{n}X_{n-1}+S_nY_n,\quad n\geq0,\\
C_{n,2}&=R_nY_{n-1},\quad n\geq1.
\end{split}
\end{equation}
Since $A_{n,2}, B_{n,2},C_{n,2}$ are matrices of dimension $(n+1)\times(n+2)$, $(n+1)\times(n+1)$ and $(n+1)\times n$, respectively, we have that $X_n, Y_n$ are matrices of dimension $(n+1)\times(n+2)$ and $(n+1)\times(n+1)$, respectively, and $S_n, R_n$ are matrices of dimension $(n+1)\times(n+1)$ and $(n+1)\times n$, respectively. We found that \emph{one} solution of equations \eqref{FLU} is given by coefficients $X_n, Y_n, S_n, R_n$, where
\begin{equation*}
X_n=\left[
\begin{array}{ccccccc}
x_{n,0}^{(2)} & x_{n,0}^{(3)}&         &          &  \\
& x_{n,1}^{(2)}& x_{n,1}^{(3)} &         &    \\
        && \ddots  & \ddots  &         \\
 &        && x_{n,n}^{(2)} & x_{n,n}^{(3)}
\end{array}
\right],
\quad
Y_n=\left[
\begin{array}{ccccccc}
y_{n,0}^{(2)} & y_{n,0}^{(3)}&         &        &  \\
 & y_{n,1}^{(2)}& y_{n,1}^{(3)} &        &           \\
              &  & \ddots  & \ddots &           \\
              &        & &  y_{n,n-1}^{(2)} & y_{n,n-1}^{(3)}  \\
              &        &       &       & y_{n,n}^{(2)}
\end{array}
\right],
\end{equation*}
and
\begin{equation*}
S_n=\left[
\begin{array}{ccccccc}
s_{n,0}^{(2)} & &         &        &  \\
s_{n,1}^{(1)} & s_{n,1}^{(2)}&&        &           \\
              & \ddots & \ddots  &  &           \\
              &        & s_{n,n-1}^{(1)} &  s_{n,n-1}^{(2)} &   \\
              &        &       &  s_{n,n}^{(1)}     & s_{n,n}^{(2)}
\end{array}
\right],\quad
R_n=\left[
\begin{array}{ccccccc}
r_{n,0}^{(2)} & &         &          &  \\
r_{n,1}^{(1)} & r_{n,1}^{(2)}& &         &    \\
        & \ddots & \ddots  &  &         \\
         &        & r_{n,n-2}^{(1)} & r_{n,n-2}^{(2)} &  \\
         &        &        &  r_{n,n-1}^{(1)} & r_{n,n-1}^{(2)} \\
        &        &        &         & r_{n,n}^{(1)}
\end{array}
\right].
\end{equation*}
The elements in $X_n, Y_n$ are given by
\begin{equation}\label{xyprob}
\begin{split}
x_{n,k}^{(2)}&=\frac{(n-k+\alpha+1)(\beta+k+1)}{(2n+\alpha+\beta+\gamma+3)(\beta+\gamma+2k+2)},\quad k=0,1,\ldots,n,\\
x_{n,k}^{(3)}&=\frac{(n+k+\alpha+\beta+\gamma+3)(\gamma+k+1)}{(2n+\alpha+\beta+\gamma+3)(\beta+\gamma+2k+2)},\quad k=0,1,\ldots,n,\\
y_{n,k}^{(2)}&=\frac{(n+k+\beta+\gamma+2)(\beta+k+1)}{(2n+\alpha+\beta+\gamma+3)(\beta+\gamma+2k+2)},\quad k=0,1,\ldots,n,\\
y_{n,k}^{(3)}&=\frac{(n-k)(\gamma+k+1)}{(2n+\alpha+\beta+\gamma+3)(\beta+\gamma+2k+2)},\quad k=0,1,\ldots,n-1,
\end{split}
\end{equation}
while the elements of $S_n, R_n$ are given by
\begin{equation}\label{srprob}
\begin{split}
s_{n,k}^{(1)}&=\frac{k(n-k+\alpha+1)}{(2n+\alpha+\beta+\gamma+2)(\beta+\gamma+2k+1)},\quad k=1,\ldots,n,\\
s_{n,k}^{(2)}&=\frac{(n+k+\alpha+\beta+\gamma+2)(\beta+\gamma+k+1)}{(2n+\alpha+\beta+\gamma+2)(\beta+\gamma+2k+1)},\quad k=0,1,\ldots,n,\\
r_{n,k}^{(1)}&=\frac{k(n+k+\beta+\gamma+1)}{(2n+\alpha+\beta+\gamma+2)(\beta+\gamma+2k+1)},\quad k=1,\ldots,n,\\
r_{n,k}^{(2)}&=\frac{(n-k)(\beta+\gamma+k+1)}{(2n+\alpha+\beta+\gamma+2)(\beta+\gamma+2k+1)},\quad k=0,1,\ldots,n-1.
\end{split}
\end{equation}
Observe the important simplification of these elements compared with \eqref{coeffs2}--\eqref{coeffs4}. Another important observation is that $J_L$ and $J_U$ are \emph{also stochastic matrices}, so each one of them is again a discrete-time QBD process.

\begin{remark}
The stochastic LU factorization in \eqref{JLU} is not necessarily unique, but it is certainly one that simplifies all computations significantly. Similar considerations apply if we take into account a stochastic UL factorization. It is possible to see that the elements of the factors $X_n, Y_n, S_n, R_n$ for the UL factorization (at least one) are the same as the ones of the LU factorization but replacing $\beta$ by $\beta-1$. For more information about stochastic UL or LU factorizations see \cite{GdI1,GdI2}.
\end{remark}

From now on, we will assume that $\alpha,\beta,\gamma$ are nonnegative integers. Consider $\{Z_t : t=0,1,\ldots\}$ the discrete-time QBD process on the state space $\{(n,k) : 0\leq k\leq n, n\in\mathbb{N}_0\}$ whose one-step transition probability matrix is given by the coefficients $A_{n,2},B_{n,2},C_{n,2}$ in \eqref{coeffs2}--\eqref{coeffs4} (see also \eqref{JacMat} and \eqref{ABC2}). Consider the LU block factorization \eqref{JLU} $J_{2}=J_LJ_U$. Each of these matrices $J_L$ and $J_U$ will give rise to an urn experiment which we call Experiment 1 and Experiment 2, respectively. At every time step $t=0,1,2,\ldots$ the state $(n,k)$ will represent the number of $n$ blue balls inside the $k$-th urn $\mbox{A}_k, k=0,1,\ldots,n$. Observe that the number of urns available goes with the number of blue balls at every time step. All the urns we use in both experiments sit in a bath consisting of an infinite number of blue and red balls.

Experiment 1 (for $J_L$) will give rise to a discrete-time pure-death QBD process $\{Z_t^{(1)} : t=0,1,\ldots\}$ on $\{(n,k) : 0\leq k\leq n, n\in\mathbb{N}_0\}$ with diagram given by Figure 4.
\begin{figure}
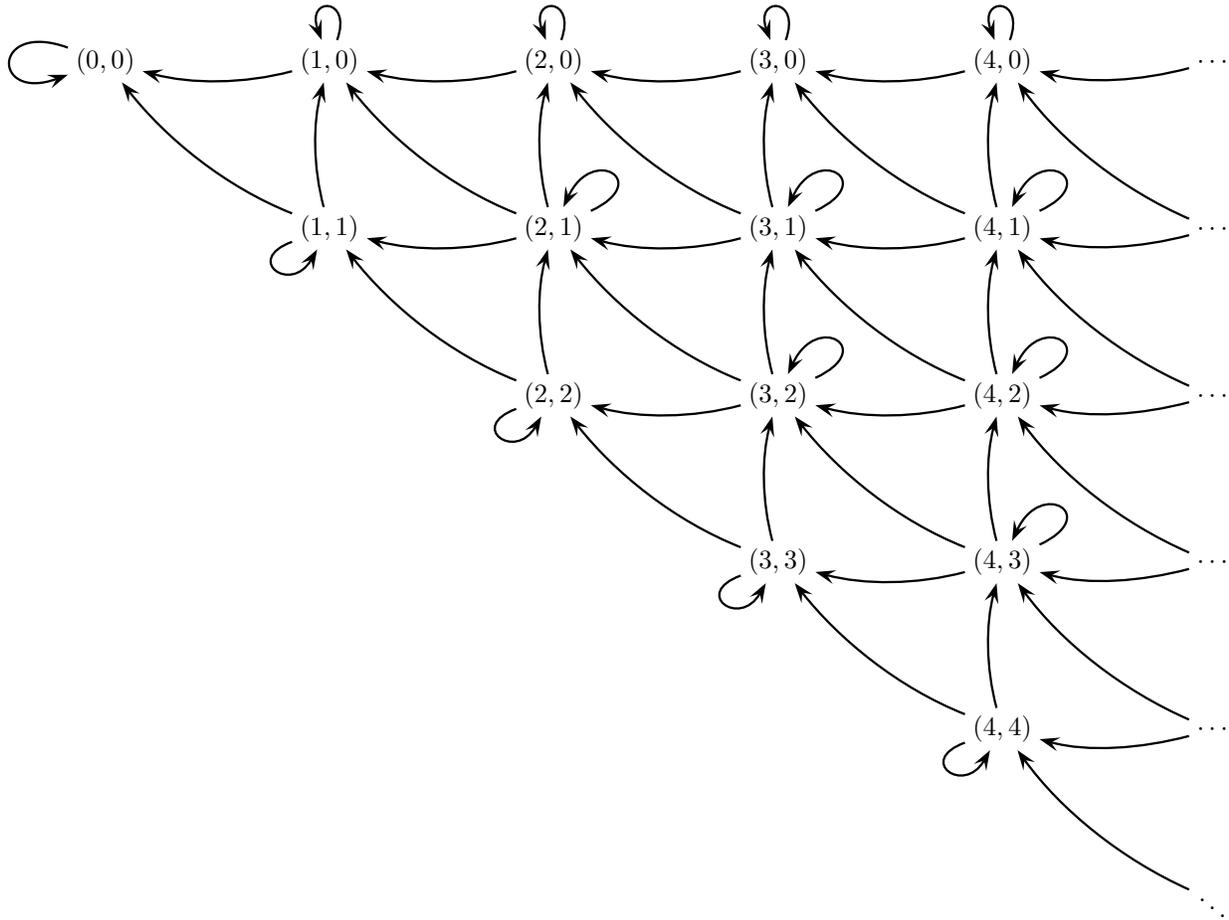

\begin{center}
$$\begin{psmatrix}[rowsep=1.8cm,colsep=2.2cm]
  \rnode{0}{(0,0)}& \rnode{1}{(1,0)} & \rnode{3}{(2,0)}& \rnode{6}{(3,0)}& \rnode{10}{(4,0)}& \rnode{15}{\Huge{\cdots}} \\
    & \rnode{2}{(1,1)} & \rnode{4}{(2,1)}& \rnode{7}{(3,1)}& \rnode{11}{(4,1)}& \rnode{16}{\Huge{\cdots}} \\
     & & \rnode{5}{(2,2)}& \rnode{8}{(3,2)}& \rnode{12}{(4,2)}& \rnode{17}{\Huge{\cdots}} \\
    & & & \rnode{9}{(3,3)}&\rnode{13}{(4,3)}& \rnode{18}{\Huge{\cdots}} \\
     & & & & \rnode{14}{(4,4)}& \rnode{19}{\Huge{\cdots}} \\
     & & & & & \rnode{20}{\ddots}
\psset{nodesep=3pt,arcangle=15,labelsep=2ex,linewidth=0.3mm,arrows=->,arrowsize=1mm
3} \nccurve[angleA=160,angleB=200,ncurv=6]{0}{0}
\nccurve[angleA=70,angleB=110,ncurv=6]{1}{1}
\nccurve[angleA=70,angleB=110,ncurv=6]{3}{3}
\nccurve[angleA=70,angleB=110,ncurv=6]{6}{6}
\nccurve[angleA=70,angleB=110,ncurv=6]{10}{10}
\nccurve[angleA=200,angleB=240,ncurv=4]{2}{2}
\nccurve[angleA=200,angleB=240,ncurv=4]{5}{5}
\nccurve[angleA=200,angleB=240,ncurv=4]{9}{9}
\nccurve[angleA=200,angleB=240,ncurv=4]{14}{14}
\nccurve[angleA=25,angleB=65,ncurv=5]{4}{4}
\nccurve[angleA=25,angleB=65,ncurv=5]{8}{8}
\nccurve[angleA=25,angleB=65,ncurv=5]{13}{13}
\nccurve[angleA=25,angleB=65,ncurv=5]{7}{7}
\nccurve[angleA=25,angleB=65,ncurv=5]{11}{11}
\nccurve[angleA=25,angleB=65,ncurv=5]{12}{12}
\ncarc{1}{0}\ncarc{2}{0} \ncarc{2}{1}\ncarc{4}{1}
\ncarc{5}{2}\ncarc{9}{5}\ncarc{14}{9}\ncarc{20}{14}
\ncarc{4}{3}\ncarc{5}{4} \ncarc{7}{6}
\ncarc{3}{1}\ncarc{6}{3}\ncarc{10}{6}\ncarc{15}{10}\ncarc{4}{2}\ncarc{7}{4}\ncarc{11}{7}\ncarc{16}{11}\ncarc{8}{5}\ncarc{12}{8}\ncarc{17}{12}\ncarc{13}{9}\ncarc{18}{13}\ncarc{19}{14}
\ncarc{8}{7}\ncarc{9}{8}\ncarc{11}{10}\ncarc{12}{11}\ncarc{13}{12}\ncarc{14}{13}
\ncarc{7}{3}\ncarc{11}{6}\ncarc{8}{4}\ncarc{12}{7}\ncarc{13}{8}\ncarc{16}{10}\ncarc{17}{11}\ncarc{18}{12}\ncarc{19}{13}
\end{psmatrix}
$$
\end{center}
\caption{Diagram of all possible transitions of the discrete-time pure-death QBD process generated by the Jacobi matrix $J_L$.}
\end{figure}
The initial state is $(n,k)$, where $n$ is the number of blue balls inside the $k$-th urn $\mbox{A}_k$. Remove all the balls and put $k$ blue balls and $k+\beta+\gamma+1$ red balls in the urn $\mbox{A}_k$. Draw one ball from the urn at random with the uniform distribution. We have two possibilities:
\begin{enumerate}
\item If we get a blue ball then we remove/add balls until we have $n-k+\alpha+1$ blue balls and $n+k+\beta+\gamma+1$ red balls in the urn $\mbox{A}_k$. Then we draw again one ball from the urn at random with the uniform distribution and we have two possibilities:
\begin{itemize}
\item If we get a blue ball then we remove all balls in urn $\mbox{A}_{k}$ and add $n$ blue balls to the urn $\mbox{A}_{k-1}$ and start over. Therefore, joining both steps, we have
$$
\mathbb{P}\left[Z_1^{(1)}=(n,k-1)\; |\; Z_0^{(1)}=(n,k)\right]=\frac{k}{\beta+\gamma+2k+1}\frac{n-k+\alpha+1}{2n+\alpha+\beta+\gamma+2}.
$$
Observe that this probability is given by $s_{n,k}^{(1)}$ in \eqref{srprob}.
\item If we get a red ball then we remove all balls in urn $\mbox{A}_{k}$ and add $n-1$ blue balls to the urn $\mbox{A}_{k-1}$ and start over. Therefore, joining both steps, we have
$$
\mathbb{P}\left[Z_1^{(1)}=(n-1,k-1)\; |\; Z_0^{(1)}=(n,k)\right]=\frac{k}{\beta+\gamma+2k+1}\frac{n+k+\beta+\gamma+1}{2n+\alpha+\beta+\gamma+2}.
$$
Observe that this probability is given by $r_{n,k}^{(1)}$ in \eqref{srprob}.
\end{itemize}
\item If we get a red ball then we remove/add balls until we have $n+k+\alpha+\beta+\gamma+2$ blue balls and $n-k$ red balls in the urn $\mbox{A}_k$. Then we draw again one ball from the urn at random with the uniform distribution and we have two possibilities:
\begin{itemize}
\item If we get a blue ball then we remove/add balls in urn $\mbox{A}_{k}$ until we have $n$ blue balls in urn $\mbox{A}_{k}$ and start over. Therefore, joining both steps, we have
$$
\mathbb{P}\left[Z_1^{(1)}=(n,k)\; |\; Z_0^{(1)}=(n,k)\right]=\frac{\beta+\gamma+k+1}{\beta+\gamma+2k+1}\frac{n+k+\alpha+\beta+\gamma+2}{2n+\alpha+\beta+\gamma+2}.
$$
Observe that this probability is given by $s_{n,k}^{(2)}$ in \eqref{srprob}.
\item If we get a red ball then we remove/add balls in urn $\mbox{A}_{k}$ until we have $n-1$ blue balls in urn $\mbox{A}_{k}$ and start over. Therefore, joining both steps, we have
$$
\mathbb{P}\left[Z_1^{(1)}=(n-1,k)\; |\; Z_0^{(1)}=(n,k)\right]=\frac{\beta+\gamma+k+1}{\beta+\gamma+2k+1}\frac{n-k}{2n+\alpha+\beta+\gamma+2}.
$$
Observe that this probability is given by $r_{n,k}^{(2)}$ in \eqref{srprob}.
\end{itemize}
\end{enumerate}

Experiment 2 (for $J_U$) is similar but it will give rise to a discrete-time pure-birth QBD process $\{Z_t^{(2)} : t=0,1,\ldots\}$ on $\{(n,k) : 0\leq k\leq n, n\in\mathbb{N}_0\}$ with diagram given by Figure 5.
\begin{figure}
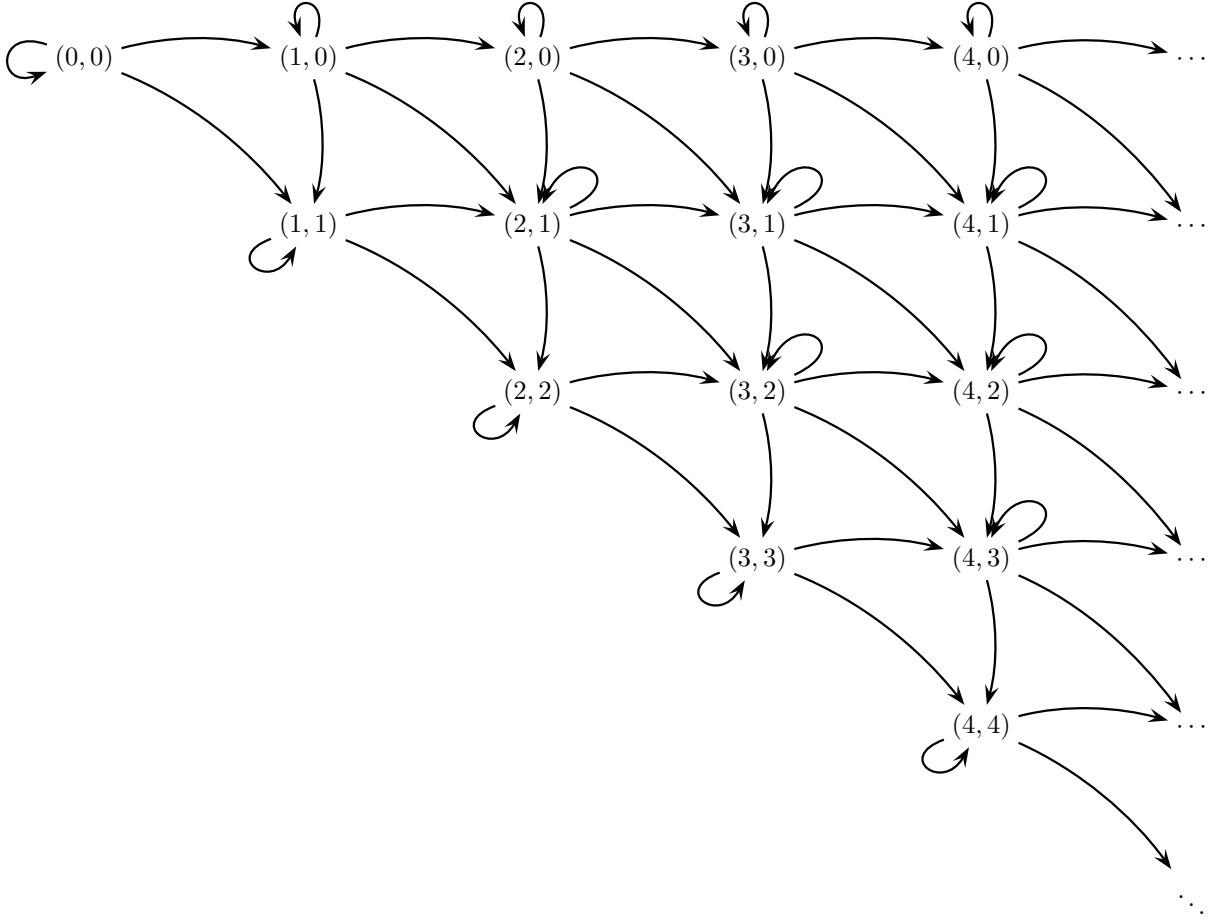

\begin{center}
$$\begin{psmatrix}[rowsep=1.8cm,colsep=2.2cm]
  \rnode{0}{(0,0)}& \rnode{1}{(1,0)} & \rnode{3}{(2,0)}& \rnode{6}{(3,0)}& \rnode{10}{(4,0)}& \rnode{15}{\Huge{\cdots}} \\
    & \rnode{2}{(1,1)} & \rnode{4}{(2,1)}& \rnode{7}{(3,1)}& \rnode{11}{(4,1)}& \rnode{16}{\Huge{\cdots}} \\
     & & \rnode{5}{(2,2)}& \rnode{8}{(3,2)}& \rnode{12}{(4,2)}& \rnode{17}{\Huge{\cdots}} \\
    & & & \rnode{9}{(3,3)}&\rnode{13}{(4,3)}& \rnode{18}{\Huge{\cdots}} \\
     & & & & \rnode{14}{(4,4)}& \rnode{19}{\Huge{\cdots}} \\
     & & & & & \rnode{20}{\ddots}
\psset{nodesep=3pt,arcangle=15,labelsep=2ex,linewidth=0.3mm,arrows=->,arrowsize=1mm
3} \nccurve[angleA=160,angleB=200,ncurv=4]{0}{0}
\nccurve[angleA=70,angleB=110,ncurv=6]{1}{1}
\nccurve[angleA=70,angleB=110,ncurv=6]{3}{3}
\nccurve[angleA=70,angleB=110,ncurv=6]{6}{6}
\nccurve[angleA=70,angleB=110,ncurv=6]{10}{10}
\nccurve[angleA=200,angleB=240,ncurv=4]{2}{2}
\nccurve[angleA=200,angleB=240,ncurv=4]{5}{5}
\nccurve[angleA=200,angleB=240,ncurv=4]{9}{9}
\nccurve[angleA=200,angleB=240,ncurv=4]{14}{14}
\nccurve[angleA=25,angleB=65,ncurv=5]{4}{4}
\nccurve[angleA=25,angleB=65,ncurv=5]{8}{8}
\nccurve[angleA=25,angleB=65,ncurv=5]{13}{13}
\nccurve[angleA=25,angleB=65,ncurv=5]{7}{7}
\nccurve[angleA=25,angleB=65,ncurv=5]{11}{11}
\nccurve[angleA=25,angleB=65,ncurv=5]{12}{12}
\ncarc{0}{1}\ncarc{0}{2} \ncarc{1}{2}\ncarc{1}{4}
\ncarc{2}{5}\ncarc{5}{9}\ncarc{9}{14}\ncarc{14}{20}
\ncarc{3}{4}\ncarc{4}{5} \ncarc{6}{7}
\ncarc{1}{3}\ncarc{3}{6}\ncarc{6}{10}\ncarc{10}{15}\ncarc{2}{4}\ncarc{4}{7}\ncarc{7}{11}\ncarc{11}{16}\ncarc{5}{8}\ncarc{8}{12}\ncarc{12}{17}\ncarc{9}{13}\ncarc{13}{18}\ncarc{14}{19}
\ncarc{7}{8}\ncarc{8}{9}\ncarc{10}{11}\ncarc{11}{12}\ncarc{12}{13}\ncarc{13}{14}
\ncarc{3}{7}\ncarc{6}{11}\ncarc{4}{8}\ncarc{7}{12}\ncarc{8}{13}\ncarc{10}{16}\ncarc{11}{17}\ncarc{12}{18}\ncarc{13}{19}
\end{psmatrix}
$$
\end{center}
\caption{Diagram of all possible transitions of the discrete-time pure-birth QBD process generated by the Jacobi matrix $J_U$.}
\end{figure}
Again, the initial state is $(n,k)$, where $n$ is the number of blue balls inside the $k$-th urn $\mbox{A}_k$. Remove all the balls and put $k+\gamma+1$ blue balls and $k+\beta+1$ red balls in the urn $\mbox{A}_k$. Draw one ball from the urn at random with the uniform distribution. We have two possibilities:
\begin{enumerate}
\item If we get a blue ball then we remove/add balls until we have $n+k+\alpha+\beta+\gamma+3$ blue balls and $n-k$ red balls in the urn $\mbox{A}_k$. Then we draw again one ball from the urn at random with the uniform distribution and we have two possibilities:

\begin{itemize}
\item If we get a blue ball then we remove all balls in urn $\mbox{A}_{k}$ and add $n+1$ blue balls to the urn $\mbox{A}_{k+1}$ and start over. Therefore, joining both steps, we have
$$
\mathbb{P}\left[Z_1^{(2)}=(n+1,k+1)\; |\; Z_0^{(2)}=(n,k)\right]=\frac{\gamma+k+1}{\beta+\gamma+2k+2}\frac{n+k+\alpha+\beta+\gamma+3}{2n+\alpha+\beta+\gamma+3}.
$$
Observe that this probability is given by $x_{n,k}^{(3)}$ in \eqref{xyprob}.
\item If we get a red ball then we remove all balls in urn $\mbox{A}_{k}$ and add $n$ blue balls to the urn $\mbox{A}_{k+1}$ and start over. Therefore, joining both steps, we have
$$
\mathbb{P}\left[Z_1^{(2)}=(n,k+1)\; |\; Z_0^{(2)}=(n,k)\right]=\frac{\gamma+k+1}{\beta+\gamma+2k+2}\frac{n-k}{2n+\alpha+\beta+\gamma+3}.
$$
Observe that this probability is given by $y_{n,k}^{(3)}$ in \eqref{xyprob}.
\end{itemize}
\item If we get a red ball then we remove/add balls until we have $n-k+\alpha+1$ blue balls and $n+k+\beta+\gamma+2$ red balls in the urn $\mbox{A}_k$. Then we draw again one ball from the urn at random with the uniform distribution and we have two possibilities:
\begin{itemize}
\item If we get a blue ball then we remove/add balls in urn $\mbox{A}_{k}$ until we have $n+1$ blue balls in urn $\mbox{A}_{k}$ and start over. Therefore, joining both steps, we have
$$
\mathbb{P}\left[Z_1^{(2)}=(n+1,k)\; |\; Z_0^{(2)}=(n,k)\right]=\frac{\beta+k+1}{\beta+\gamma+2k+2}\frac{n-k+\alpha+1}{2n+\alpha+\beta+\gamma+3}.
$$
Observe that this probability is given by $x_{n,k}^{(2)}$ in \eqref{xyprob}.
\item If we get a red ball then we remove/add balls in urn $\mbox{A}_{k}$ until we have $n$ blue balls in urn $\mbox{A}_{k}$ and start over. Therefore, joining both steps, we have
$$
\mathbb{P}\left[Z_1^{(2)}=(n,k)\; |\; Z_0^{(2)}=(n,k)\right]=\frac{\beta+k+1}{\beta+\gamma+2k+2}\frac{n+k+\beta+\gamma+2}{2n+\alpha+\beta+\gamma+3}.
$$
Observe that this probability is given by $y_{n,k}^{(2)}$ in \eqref{xyprob}.
\end{itemize}
\end{enumerate}

The urn model for $J_{2}$ will be the composition of Experiment 1 and then Experiment 2. Combining all possibilities we have the transition probabilities for the QBD process $\{Z_t : t=0,1,\ldots\}$. Indeed
\begin{align*}
\mathbb{P}\left[Z_1=(n+1,k-1)\; |\; Z_0=(n,k)\right]&=s_{n,k}^{(1)}x_{n,k-1}^{(2)}=a_{n,k}^{(1)},\\
\mathbb{P}\left[Z_1=(n+1,k)\; |\; Z_0=(n,k)\right]&=s_{n,k}^{(1)}x_{n,k-1}^{(3)}+s_{n,k}^{(2)}x_{n,k}^{(2)}=a_{n,k}^{(2)},\\
\mathbb{P}\left[Z_1=(n+1,k+1)\; |\; Z_0=(n,k)\right]&=s_{n,k}^{(2)}x_{n,k}^{(3)}=a_{n,k}^{(3)},\\
\mathbb{P}\left[Z_1=(n,k-1)\; |\; Z_0=(n,k)\right]&=s_{n,k}^{(1)}y_{n,k-1}^{(2)}+r_{n,k}^{(1)}x_{n-1,k-1}^{(2)}=b_{n,k}^{(1)},\\
\mathbb{P}\left[Z_1=(n,k)\; |\; Z_0=(n,k)\right]&=s_{n,k}^{(1)}y_{n,k-1}^{(3)}+s_{n,k}^{(2)}y_{n,k}^{(2)}+r_{n,k}^{(1)}x_{n-1,k-1}^{(3)}+r_{n,k}^{(2)}x_{n-1,k}^{(2)}=b_{n,k}^{(2)},\\
\mathbb{P}\left[Z_1=(n,k+1)\; |\; Z_0=(n,k)\right]&=s_{n,k}^{(2)}y_{n,k}^{(3)}+r_{n,k}^{(2)}x_{n-1,k}^{(3)}=b_{n,k}^{(3)},\\
\mathbb{P}\left[Z_1=(n-1,k-1)\; |\; Z_0=(n,k)\right]&=r_{n,k}^{(1)}y_{n-1,k-1}^{(2)}=c_{n,k}^{(1)},\\
\mathbb{P}\left[Z_1=(n-1,k)\; |\; Z_0=(n,k)\right]&=r_{n,k}^{(1)}y_{n-1,k-1}^{(3)}+r_{n,k}^{(2)}y_{n-1,k}^{(2)}=c_{n,k}^{(2)},\\
\mathbb{P}\left[Z_1=(n-1,k+1)\; |\; Z_0=(n,k)\right]&=r_{n,k}^{(2)}y_{n-1,k}^{(3)}=c_{n,k}^{(3)}.
\end{align*}
As we showed before, and since we are assuming that $\alpha,\beta,\gamma$ are nonnegative integers, the urn model derived by this discrete-time QBD process is always transient.

A similar continuous-time QBD process could have been derived for the normalization of the polynomials at the point $(0,0)$ in Section \ref{secnorm0}, but now with two parameters $\tau_1,\tau_2$ subject to the restrictions in \eqref{condaa22}.

\section{Concluding remarks and further research}\label{secult}

In this paper we have studied several examples of bivariate orthogonal polynomials related to discrete or continuous-time QBD processes. Also, we gave probabilistic models for them in terms of, mainly, urn models. All examples are constructed according to certain normalization of the polynomials at one of the ``corners'' of the support of orthogonality. This restriction seems to be important in order to have recurrence relations with probabilistic interpretations (like the situation of scalar birth-death chains), but not all points in the boundary (including corners) lead to coefficients which may be interpreted as a QBD process, as we saw, for instance, in the case of orthogonal polynomials on the triangle. One open problem could be trying to explain why this restriction is needed in order to construct a QBD process.

Certainly, we have analyzed other examples of bivariate orthogonal polynomials. In particular, the seven different classes studied by T. Koornwinder in \cite{Ko75}. But we have not found any probabilistic interpretation in terms of QBD processes in any of them. The two main reasons for that are:
\begin{enumerate}
\item The bivariate orthogonal polynomials $\{P_{n,k} : 0\leq k\leq n \}$ (any way of constructing them) can not be normalized at some interesting point $(a,b)$ at the boundary of the support of orthogonality such that $P_{n,k}(a,b)=1$ for all $n\in\mathbb{N}_0$ and $0\leq k\leq n$, since they may vanish at that point for some degree of the polynomials. In this situation, we can not proceed in the same way as we have proceeded through this paper. It is possible, though, that there may exist another normalization of the polynomials such that the coefficients of the three-term recurrence relations can be linearly combined in such a way that they lead to a probabilistic interpretation (for instance, for the product Laguerre polynomials in Section \ref{secplpl}), but we have not found any nontrivial situation where this happens.
\item It is possible to normalize the bivariate orthogonal polynomials $\{P_{n,k} : 0\leq k\leq n \}$ at certain point $(a,b)$ at the boundary (or inside) the support of orthogonality such that $P_{n,k}(a,b)=1$ for all $n\in\mathbb{N}_0$ and $0\leq k\leq n$, but there are no possible linear combinations of the two corresponding Jacobi matrices such that they lead to a QBD process. This is the situation, for instance, for the product Jacobi-Laguerre polynomials at the point $(0,1)$ (see Remark \ref{remJL}) or any other example normalized at some point which is not a ``corner'' of the support of orthogonality.
\end{enumerate}

There is one iconic example that we have not been able to find any probabilistic interpretation for, namely orthogonal polynomials on the unit disk. We have tried several definitions and normalizations of the polynomials, but it seems that neither of them works out due to some of the two reasons mentioned above. We believe that the problem with this example may lie in the fact that the unit disk does not have any ``corners''.

There are many examples of bivariate orthogonal polynomials that have not been considered in this paper, like, for instance, the two families of Koornwinder polynomials (see Sections 2.7 and 2.9 of \cite{DX14}) or some families of Krall or Sobolev type bivariate polynomials (see for example \cite{AX13,DFI20,DFLPP,DFPP12,DFPP16,DFP18,FMPP15,MP,Xu17}), where a Dirac delta is added at one (or several) points of the support of orthogonality or some other more complicated situations. Also we have not considered examples of multivariate orthogonal polynomials for $d\geq3$. For instance, three-dimensional examples, like the unit ball, the unit sphere or the simplex. In this case we have $d=3$ in \eqref{knd} and we will have diagrams similar to the one in Figure 3, but now the number of phases is $\binom{n+2}{2}$. Certainly some of the previous problems will be dealt with in future publications.


\begin{thebibliography}{99}

\bibitem{AS72} Abramowitz, M. and Stegun, I.A., \textit{Handbook of mathematical functions}, 9th printing. Dover, New York, 1972.

\bibitem{Ag65} Agahanov, C.A., \textit{A method of constructing orthogonal polynomials of two variables for a certain class of weight functions}, Vestnik Leningrad Univ. 20, no. 19, 5--10.

\bibitem{AX13} Akta\c{s} and R., Xu, Y., {\em Sobolev orthogonal polynomials on a simplex}, Int. Math. Res. Not. Volume 2013, Issue 13, 2013, 3087--3131.

\bibitem{BTa} Bright, L.W. and Taylor, P.G., \emph{Calculating the equilibrium distribution in level dependent quasi-birth-and-death processes}, Comm. Statist. Stochastic Models \textbf{11} (1995), 497--514.


\bibitem{Clay} Clayton, A., {\em Quasi-birth-and-death processes and matrix-valued orthogonal polynomials}, SIAM J. Matrix Anal. Appl. \textbf{31} (2010), 2239--2260.
    
\bibitem{DFI20} Delgado, A.M., Fern\'andez, L. and Iliev, P., {\em Darboux transformations from the Appell-Lauricella operator}, J. Math. Anal. Appl. \textbf{482} (2020), no. 2, 123546, 21 pp.
    
\bibitem{DFLPP} Delgado, A.M., Fern\'andez, L., Lubinsky, D.S., P\'erez, T. E. and Pi\~{n}ar, M. A., {\em Sobolev orthogonal polynomials on the unit ball via outward normal derivatives}, J. Math. Anal. Appl. \textbf{440} (2016), no. 2, 716--740.
	
\bibitem{DFPP12} Delgado, A.M., Fern\'andez, L., P\'erez, T.E. and Pi\~{n}ar, M.A. {\em On the Uvarov modification of two variable orthogonal polynomials on the disk}, Complex Anal. Oper. Theory \textbf{6} (2012), no. 3, 665--676.
    
\bibitem{DFPP16} Delgado, A.M., Fern\'andez, L., P\'erez, T.E. and Pi\~{n}ar, M.A., {\em Multivariate orthogonal polynomials and modified moment functionals}, SIGMA Symmetry Integrability Geom. Methods Appl. \textbf{12} (2016), Paper No. 090, 25 pp.
    
\bibitem{DFP18}  Delgado, A.M., Fern\'andez, L. and P\'erez, T.E., {\em Fourth order partial differential equations for Krall-type orthogonal polynomials on the triangle}, Proc. Amer. Math. Soc. \textbf{146} (2018), no. 9, 3961--3974.
	
\bibitem{DRSZ} Dette, H., Reuther, B., Studden, W. and Zygmunt, M., {\em Matrix measures and random walks with a block tridiagonal transition matrix}, SIAM J. Matrix Anal. Applic. \textbf{29} (2006), 117--142.

\bibitem{DR} Dette, H. and Reuther, B., {\em
Some comments on quasi-birth-and-death processes and matrix measures}, J. Probability and Statistics Volume 2010 (2010), Article ID 730543, 23 pages.
	
\bibitem{DiGr} Diaconis, P. and Griffiths R.C.,
\textit{An introduction to multivariate Krawtchouk polynomials and their applications},  J. Stat. Plan. Inference \textbf{154} (2014), 39--53.		

\bibitem{DX14} Dunkl, C.F.  and Xu, Y., \textit{Orthogonal polynomials of several variables}, 2nd edition, Encyclopedia of Mathematics and its Applications, vol. 155, Cambridge Univ. Press, 2014.
    
\bibitem{FMPP15}  Fern\'andez, L., Marcell\'an, F., P\'erez, T.E., Pi\~{n}ar, M.A. and Xu, Y., {\em Sobolev orthogonal polynomials on product domains}, J. Comput. Appl. Math. \textbf{284} (2015), 202--215.

\bibitem{Grif} Griffiths R.C., \textit{Orthogonal polynomials on the multinomial distribution},  Austral. J. Statist. \textbf{13} (1971), 27--35.

\bibitem{Grif2} Griffiths R.C., \textit{Lancaster distributions and Markov chains with multivariate Poisson-Charlier, Meixner and Hermite-Chebycheff polynomial eigenfunctions}, J. Approx. Theory \textbf{207} (2016), 139--164.

\bibitem{Grif3} Griffiths R.C., \textit{Multivariate Krawtchouk polynomials and composition birth and death processes}, Symmetry \textbf{8} (2016), 33.

\bibitem{G2} Gr\"unbaum, F.A., \textit{Random walks and orthogonal polynomials: some challenges}, Probability, Geometry and Integrable Systems, MSRI Publication, volume \textbf{55}, 2007.


\bibitem{G1}\textrm{Gr\"unbaum, F.A.}, \textit{QBD processes and matrix orthogonal polynomials: some new explicit examples}, Numerical Methods for Structured Markov Chains, eds. D. Bini, B. Meini, V. Ramaswami, M.A. Remiche and P. Taylor, Dagstuhl Seminar Proceedings, 2008.

\bibitem{G5}\textrm{Gr\"unbaum, F.A.}, \textit{The Karlin-McGregor formula for a variant of a discrete version of Walsh's spider}, J. Phys. A \textbf{42} (2009), no. 45, 454010, 10 pp.

\bibitem{G3} Gr\"unbaum, F.A., \emph{An urn model associated with Jacobi polynomials}, Commun. Applied Math. Comput. Sciences \textbf{5} (2010), no. 1, 55--63.

\bibitem{GdI3} Gr\"unbaum, F.A. and de la Iglesia, M.D., \textit{Matrix valued orthogonal polynomials arising from group representation theory and a family of quasi-birth-and-death processes}, SIAM J. Matrix Anal. Applic. \textbf{30} (2008), 741--761.

\bibitem{GdI1} Gr\"unbaum, F.A. and de la Iglesia, M.D.,
\textit{Stochastic LU factorizations, Darboux transformations and urn models}, J. Appl. Prob. \textbf{55} (2018), 862--886.

\bibitem{GdI2} Gr\"unbaum, F.A. and de la Iglesia, M.D.,
\textit{Stochastic Darboux transformations for quasi-birth-and-death processes and urn models}, J. Math. Anal. Appl. \textbf{478} (2019), 634--654.

\bibitem{GR} Gr\"unbaum, F.A. and Rahman, M.,
\textit{A system of multivariable Krawtchouk polynomials and a probabilistic application}, SIGMA Symmetry Integrability Geom. Methods Appl. \textbf{7} (2011), Paper No. 119, 17 pp.

\bibitem{dI1}  de la Iglesia, M.D., \emph{A note on the invariant distribution of a quasi-birth-and-death process}, J. Phys. A: Math. Theor. \textbf{44} (2011) 135201 (9pp).

\bibitem{dIR} de la Iglesia, M.D. and Rom\'an, P., \emph{Some bivariate stochastic models arising from group representation theory}, Stoch. Proc. Appl. \textbf{128} (2018), 3300--3326.


\bibitem{KMc2}  Karlin, S. and McGregor, J., {\em
The differential equations of birth and death processes, and the Stieltjes moment problem}, Trans. Amer. Math. Soc. \textbf{85} (1957), 489--546.

\bibitem{KMc3}  Karlin, S. and McGregor, J., {\em The classification of birth-and-death processes}, Trans. Amer. Math. Soc. \textbf{86} (1957), 366--400.

\bibitem{KMc6} Karlin, S. and McGregor, J., {\em Random walks}, IIlinois J. Math. \textbf{3} (1959), 66--81.
	
\bibitem{KMc8}  Karlin, S. and McGregor, J., {\em On some stochastic models in genetics}, in Stochastic Models in Medicine and Biology, 245--279, The University of Wisconsin Press.
	
\bibitem{KMc11}  Karlin, S. and McGregor, J., {\em Ehrenfest urn models}, J. Appl. Prob. \textbf{2} (1965), 352--376.

\bibitem{KMc7} Karlin, S. and McGregor, J., {\em Linear growth models with many types and multidimensional Hahn polynomials}, in Theory and Application of Special Functions, R. Askey, ed., Academic Press, New York, 1975, pp. 261--288.	

\bibitem{Ko75} Koornwinder, T.H., \textit{Two-variable analogues of the classical orthogonal polynomials}, in: Theory and Application of Special Functions, R. Askey Editor, Academic Press 1975, 435--495.

\bibitem{LaR} Latouche, G. and Ramaswami, V., {\em Introduction to Matrix Analytic Methods in Stochastic Modeling}, ASA-SIAM Series on Statistics and Applied Probability, 1999.

\bibitem{MPP17} Marriaga, M., P\'erez, T.E. and Pi\~nar, M.A., \textit{Three term relations for a class of bivariate polynomials}, Medit. J. Math. \textbf{14} (2017), Art. 54, 26 pp.
    
\bibitem{MP} Mart\'{\i}nez, C. and Pi\~{n}ar, M.A., {\em Orthogonal polynomials on the unit ball and fourth-order partial differential equations}, SIGMA Symmetry Integrability Geom. Methods Appl. \textbf{12} (2016), Paper No. 020, 11 pp.

\bibitem{Mil} Milch, P.R., \textit{A multi-dimensional linear growth birth and death process}, Ann. Math. Statist. \textbf{39} (1968), 727--754.

\bibitem{Pr57} Proriol, J., \textit{Sur une famille de polynomes a deux variables orthogonaux dans un triangle.}, C. R. Acad. Sci. Paris t. \textbf{245} (1957), 2459--2461.

\bibitem{Se06} Seneta, E., \textit{Non-negative Matrices and Markov Chains}, Springer Series in Statistics, Springer, New York, 2006.

\bibitem{Scho} Schoutens, W., {\em Stochastic processes and orthogonal polynomials}, Lectures Notes in Statistics 146, Springer-Verlag, New York, 2000.

\bibitem{Sz78} Szeg\"{o}, G., \textit{Orthogonal polynomials}, 4th edition, vol.  23. Amer. Math. Soc. Colloq. Publ., Providence RI, 1978.
    
\bibitem{Xu17}  Xu, Y., {\em Approximation and orthogonality in Sobolev spaces on a triangle}, Constr. Approx. \textbf{46} (2017), no. 2, 349--434.

\end{thebibliography}
\end{document}